\theoremstyle{plain}
\newtheorem{thm}{Theorem}[section]
\newtheorem{prop}[thm]{Proposition}
\newtheorem{lem}[thm]{Lemma}
\theoremstyle{definition}
\theoremstyle{remark}
\newtheorem{rem}{Remark}[section]
\newcommand{\forme}[1]{}
\title{Terwilliger algebras of wreath products by 3-equivalenced schemes}
\author[K.~Kim]{Kijung Kim}
\address{Department of Mathematics, Pusan National University, Busan 609-735, Republic of Korea}
\email{knukkj@pusan.ac.kr}
\date{\today}
\subjclass{05E15; 05E30}
\begin{document}
\maketitle

\begin{abstract}
Recently G. Bhattacharyya, S.Y. Song and R. Tanaka began to study Terwilliger algebras of wreath products of one-class association schemes.
K. Kim determined the structure of Terwilliger algebras of wreath products by one-class association schemes or quasi-thin schemes.
In this paper, we study Terwilliger algebras of wreath products by $3$-equivalenced schemes.
\end{abstract}

{\footnotesize {\bf Key words:} Terwilliger algebra; Wreath product; $3$-equivalenced.}
\footnotetext{This research was supported by Basic Science Research Program through the National Research Foundation of Korea(NRF) funded by the Ministry of Education (2013R1A1A2005349).}

\section{Introduction}\label{sec:intro}

The Terwilliger algebra of an association scheme is an algebraic tool for the study of association schemes.
In general, Terwilliger algebras are non-commutative, finite dimensional, and semisimple $\mathbb{C}$-algebras.
Also they are more combinatorial and complicated than Bose-Mesner algebras.
Recently G. Bhattacharyya, S.Y. Song and R. Tanaka began to study Terwilliger algebras of wreath products of one-class association schemes in \cite{song}.
Generalizations and related works were developed by the authors of \cite{rie, hkm, kim, rieZies, Xu}.
In particular, the structure of Terwilliger algebras of wreath products by one-class association schemes or quasi-thin schemes was determined in \cite{kim}.
In this way, the attempt of generalization seems to be far from a success.
However, we propose studying Terwilliger algebras of wreath products by $k$-equivalenced schemes.

For a given scheme, its Terwilliger algebra is contained in one point extension of the scheme.
For a positive integer $k$,
an association scheme with more then one point is called \textit{k-equivalenced} if each non-diaginal basic relation has valency $k$.
A coherent configuration $(X,S)$ is called \textit{semiregular} if $|xs| \leq 1$ for $x \in X, s \in S$, where $xs:= \{ y \in X \mid (x, y) \in s \}$.
In \cite[Lemma 5.13]{mpp}, the following lemma was proved.
\begin{lem}\label{lem:impri}
Let $(X,S)$ be an imprimitive $k$-equivalenced scheme.
Then $(X \setminus \{ x \}, (S_x)_{X \setminus \{ x \}})$ is semiregular, where $S_x$ is the basic relation set of the one point extension of $(X,S)$.
\end{lem}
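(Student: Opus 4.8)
The plan is to work with the one point extension as the coherent configuration $(X,S_x)$ obtained from $S$ by individualizing the point $x$, that is, the Weisfeiler--Leman stabilization of $S$ together with the singleton relation $\{(x,x)\}$. Its fibers refine the partition $\{xs : s\in S\}$ of $X$ (note that $|xs|=k$ for nondiagonal $s$), and each basic relation of $S_x$ is contained in a basic relation of $S$. Since semiregularity only concerns $X\setminus\{x\}$, it is equivalent to the assertion that every basic relation $t$ of $S_x$ with $t\subseteq (X\setminus\{x\})\times(X\setminus\{x\})$ is a partial matching, i.e.\ $n_t\le 1$. I would therefore fix $y\in X\setminus\{x\}$ and study, for basic relations $s',r\in S$, the set $\{\,z : (x,z)\in s',\ (y,z)\in r\,\}$, aiming to show that the stabilization separates its elements into singleton out-neighbourhoods of $y$.

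To exploit imprimitivity I would fix a minimal nontrivial equivalence relation $e$ that is a union of basic relations (a system of imprimitivity, so $1_X\subsetneq e\subsetneq X\times X$) and let $B=xe$ be the block of $x$. The induced scheme on $B$ is again $k$-equivalenced, and the quotient scheme modulo $e$ is a $k$-equivalenced scheme on the $|X|/|B|$ blocks; both have fewer points than $(X,S)$. This sets up an induction on $|X|$: relations of $S_x$ joining two points of the same block are controlled by the one point extension of the block subscheme, while relations joining different blocks are controlled, after passing to the quotient, by the one point extension of the quotient scheme with the block $B$ individualized. When these smaller schemes are themselves imprimitive the inductive hypothesis applies and yields matchings, and I would then combine the within-block and between-block data to conclude $n_t\le 1$ for every $t$.

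The main obstacle is twofold. First, one must verify that the stabilization genuinely refines each relation down to a matching; concretely this amounts to proving that for $y\in xs$ and distinct $z,z'\in xs'$ the stabilized colours of $(y,z)$ and $(y,z')$ differ, equivalently that the relevant extension intersection numbers are at most $1$. This is exactly where imprimitivity is needed: it forces the interaction numbers between a basic relation and the parabolic $e$ to be rigid (each basic relation meets a block in a controlled number of points), which is what drives the counts down to $0$ or $1$. Second, the induction bottoms out at the primitive constituents---the subscheme on a minimal block and the primitive quotient by a maximal parabolic---and here semiregularity of the extension can fail for a general primitive $k$-equivalenced scheme; the point is that the ambient imprimitivity supplies additional separating information through the between-block relations, so that the restriction of the full extension $(S_x)$ to a block is strictly finer than the extension of the block subscheme alone. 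Making this gain precise, and ruling out the bad primitive configurations, is the hard part, and I expect it to rely on the structural description of imprimitive $k$-equivalenced schemes. In the schurian case this is transparent---semiregularity of $(X\setminus\{x\},(S_x)_{X\setminus\{x\}})$ is equivalent to the point stabilizer in $\autg(X,S)$ acting freely on $X\setminus\{x\}$---and I would use that picture to guide and check the purely combinatorial argument.
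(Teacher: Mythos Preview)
The paper does not supply its own proof of this lemma: it is quoted from \cite[Lemma~5.13]{mpp} in the introduction and used only as motivation. There is therefore no in-paper argument to compare your proposal against; what remains is to assess whether the sketch constitutes a proof.

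It does not, and you essentially say so yourself. The induction over a minimal parabolic $e$ is a reasonable scaffold, but you correctly locate the crux at the primitive constituents and then write that ``making this gain precise, and ruling out the bad primitive configurations, is the hard part''---without doing it. The hoped-for mechanism, that the between-block relations of the ambient scheme refine the within-block extension strictly further, is never substantiated: you have not produced any invariant computable in $(X,S_x)$ that separates two points $z,z'\in xs'$ lying in the same fiber of the block extension when the block subscheme alone fails to separate them. Until such an invariant is exhibited the induction does not close. A secondary issue: your assertion that the quotient modulo $e$ is again $k$-equivalenced is not automatic---it amounts to showing that each double coset $ese$ with $s\notin e$ is a union of exactly $n_e$ basic relations---and must be verified before the inductive hypothesis can be applied on the quotient side.

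In short, the proposal is a plan with its key step explicitly deferred. If you want to rescue the inductive route, the missing ingredient is roughly this: for $u,v\in S$ and $s\notin e$ with $p_{u^\ast v}^{s}\neq 0$, the relation $s\cap(xu\times xv)$ already splits into partial matchings under Weisfeiler--Leman refinement once the parabolic $e$ is taken into account. But that statement is essentially equivalent to what you are trying to prove, so the induction buys nothing without an independent source of separation; the schurian picture you invoke at the end is a heuristic, not a substitute for that argument.
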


If the following problems for a given $k \geq 4$ are solved,
then it seems to be close to determine the structure of Terwilliger algebras of wreath products by $k$-equivalenced schemes.
\begin{enumerate}
\item When Terwilliger algebras of $k$-equivalenced schemes coincide with their one point extensions ?
\item When the imprimitivity condition of Lemma \ref{lem:impri} can be removed ?
\end{enumerate}

In this paper, we determine the structure of Terwilliger algebras of wreath products by $3$-equivalenced schemes.
Furthermore, it supports the importance of the above two problems and provides another motivation to study $k$-equivalenced schemes.

This paper is organized as follows.
In Section \ref{sec:pre}, we review notations and basic results on coherent configurations and Terwilliger algebras
as well as recent results on $3$-equivalenced schemes.
In Section \ref{sec:main}, based on the fact that the adjacency algebras of the one point extensions of $3$-equivalenced schemes coincide with their Terwilliger algebras, we determine all central primitive idempotents of Terwilliger algebras of wreath products by $3$-equivalenced schemes.
In Section \ref{sec:mainresult}, we state our main theorem.

\section{Preliminaries}\label{sec:pre}
In this section, we use the notations and terminology given in \cite{ep, hkm}.

\subsection{Coherent configurations and adjacency algebras}
Let $X$ be a finite set and $S$ a partition of $X \times X$.
Put by $S^\cup$ the set of all unions of the elements of $S$.
A pair $\mathcal{C}=(X,S)$ is called a \textit{coherent configuration} on $X$ if the following conditions hold:
\begin{enumerate}
\item $1_X = \{(x,x)\mid x\in X \} \in S^\cup$;
\item For $s \in S$, $s^{\ast}=\{(y,x)\mid (x,y)\in s\}\in S$;
\item For all $s,t,u\in S$ and all $x, y \in X$,
$$p_{st}^{u}=| \{z\in X\mid (x,z)\in s, \ (z,y)\in t\} |$$
is constant whenever $(x,y)\in u$.
\end{enumerate}
The elements of $X$, $S$ and $S^\cup$ are called the \textit{points}, the \textit{basic relations} and the \textit{relations}, respectively.
The numbers $|X|$ and $|S|$ are called the \textit{degree} and \textit{rank}, respectively.
Any set $\Delta \subseteq X$ for which $1_\Delta:=\{ (x, x) \mid x \in \Delta \} \in S$ is called the \textit{fiber}. The set of all fibers is denoted by $Fib(\mathcal{C})$.
For $s, t \in S$, the product $st$ is defined by $\{ u \in S \mid p_{st}^u \neq 0 \}$.
The coherent configuration $\mathcal{C}$ is called \textit{homogeneous} or a \textit{scheme} if $1_X \in S$.
If $Y$ is a union of fibers, then the \textit{restriction} of $\mathcal{C}$ to $Y$ is defined to be a coherent configuration
\[\mathcal{C}_Y =(Y,S_Y),\]
where $S_Y$ is the set of all non-empty relations $s \cap (Y\times Y)$ with $s \in S$.

Two coherent configurations $(X,S)$ and $(Y,T)$ are called \textit{isomorphic} if there exists a bijection $\tau : X \cup S \rightarrow Y \cup T$
such that $X^\tau = Y$, $S^\tau = T$ and $(ws)^\tau = (w^\tau)s^\tau$ for all $s \in S$ and $w \in X$ with $ws \neq \emptyset$.
We denote it by $(X,S) \simeq (Y,T)$.

For $s \in S$, let $\sigma_{s}$ denote the matrix in $Mat_{X}(\mathbb{C})$ that has entries
\[(\sigma_{s})_{xy}  = \left\{
                      \begin{array}{ll}
                      1 & \hbox{if $(x,y) \in s$;} \\
                      0 & \hbox{otherwise.}
                      \end{array}
                     \right.\]
We call $\sigma_{s}$ the \textit{adjacency matrix} of $s \in S$.
Then $\bigoplus_{s\in S}\mathbb{C}\sigma_{s}$ becomes a subalgebra of
$Mat_X(\mathbb{C})$. We call $\bigoplus_{s\in S}\mathbb{C}\sigma_{s}$ the
\textit{adjacency algebra} of $S$, and denote it by $\mathcal{A}(S)$.

Let $B$ be the set of primitive idempotents of $\mathcal{A}(S)$ with respect to the Hadamard multiplication.
Then $B$ is a linear basis of $\mathcal{A}$ consisting of $\{0, 1 \}$-matrices such that
$$\sum_{s \in B} \sigma_{s} = J_X  ~\text{and}~  \sigma_{s} \in B \Leftrightarrow \sigma_{s}^t \in B.$$

\begin{rem} \label{rem:1}
There are bijections between the sets of coherent configurations and adjacency algebras as follows:
$$S \mapsto \mathcal{A}(S)  ~\text{and}~  \mathcal{A}\mapsto \mathcal{C}(\mathcal{A}),$$
where $\mathcal{C}(\mathcal{A}) = (X, S')$ with $S'=\{ s \in X\times X \mid \sigma_{s} \in B\}$.
\end{rem}

In a scheme $(X, S)$, for each $s \in S$, $n_s := p_{ss^\ast}^{1_X}$ is called \textit{valency} of $s$.

\begin{lem}[\cite{zies}]\label{lem:constant}
Let $(X, S)$ be a scheme.
For $u, v, w \in S$, we have the following:
\begin{enumerate}
\item $p_{uw}^v n_v = p_{u^\ast v}^w n_w = p_{v w^\ast}^u n_u$;
\item $n_u n_v = \sum_{s \in S} p_{uv}^s n_s$.
\end{enumerate}
\end{lem}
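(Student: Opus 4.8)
The plan is to treat the two identities separately: (i) is a combinatorial symmetry of the structure constants, while (ii) follows at once from the algebra of adjacency matrices. Both facts are classical, so the real content lies in the bookkeeping of the directions of the basic relations.

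For (i), I would count in three ways the set of triples
$$T = \{ (x,y,z) \in X^3 \mid (x,y) \in v,\ (x,z) \in u,\ (z,y) \in w \}.$$
Fixing the edge $(x,y) \in v$ first and counting the admissible $z$ gives, directly from the definition of the structure constants, exactly $p_{uw}^v$ choices for each of the $|X| n_v$ pairs in $v$, so $|T| = |X| n_v p_{uw}^v$. Fixing instead the edge $(x,z) \in u$ and counting the admissible $y$ — where one rewrites $(z,y) \in w$ as $(y,z) \in w^\ast$ so that $y$ becomes the intermediate point of a path from $x$ to $z$ — yields $p_{v w^\ast}^u$ choices for each of the $|X| n_u$ pairs in $u$. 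Finally, fixing the edge $(z,y) \in w$ and rewriting $(x,z) \in u$ as $(z,x) \in u^\ast$ yields $p_{u^\ast v}^w$ choices for each of the $|X| n_w$ pairs in $w$. Equating the three expressions for $|T|$ and cancelling the common factor $|X|$ gives (i).

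For (ii), I would pass to adjacency matrices and use the identity $\sigma_u \sigma_v = \sum_{s \in S} p_{uv}^s \sigma_s$, which is the matrix form of condition (iii) in the definition of a coherent configuration. Let $\mathbf{1}$ denote the all-ones column vector; since the scheme is homogeneous, each $\sigma_s$ has constant row sum $n_s$, that is $\sigma_s \mathbf{1} = n_s \mathbf{1}$. Applying both sides of the matrix identity to $\mathbf{1}$ then produces $n_u n_v \mathbf{1}$ on the left and $\left( \sum_{s \in S} p_{uv}^s n_s \right) \mathbf{1}$ on the right, and comparing coefficients yields (ii).

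The only genuine obstacle is the transpose/direction bookkeeping in (i): each of the three counts re-expresses one of the three incidence conditions so that the free point plays the role of the intermediate vertex in $p_{st}^r$, and it is easy to misplace a star ($\ast$) at that step. Homogeneity is what keeps the argument clean, since it guarantees that the number of ordered pairs in each basic relation is uniformly $|X|$ times its valency, so these factors cancel; without it one would have to track the sizes of the individual fibers separately.
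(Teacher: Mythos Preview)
Your argument is correct on both parts: the triple-counting for (i) is carried out with the right orientations (each of the three counts does indeed rewrite one incidence so that the free vertex becomes the middle point of a $p_{st}^r$), and the eigenvector trick with $\sigma_s\mathbf{1}=n_s\mathbf{1}$ for (ii) is the standard one-line derivation. Note that the paper itself does not supply a proof of this lemma at all --- it is quoted from Zieschang's monograph \cite{zies} --- so there is nothing to compare against; your write-up is exactly the classical proof one finds in that reference.
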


Let $(X, S)$ be a scheme and $u, v \in S$.
By Lemma \ref{lem:constant}(i), we have $p_{uw}^v =0$ if and only if $p_{u^\ast v}^w = 0$.
Thus, given $x \in X$, the cardinality of the set
\[ \{ w \cap (xu \times xv) \mid w \in S, p_{uw}^v \neq 0 \} \]
equals the number $|u^\ast v|$.

\subsection{Terwilliger algebras and one point extensions}
Let $(X,S)$ be a scheme. For $U\subseteq X$, we denote by $\varepsilon_{U}$
the diagonal matrix in $Mat_{X}(\mathbb{C})$ with entries
\[(\varepsilon_{U})_{xx}  = \left\{
                      \begin{array}{ll}
                      1 & \hbox{if $x\in U$;} \\
                      0 & \hbox{otherwise.}
                      \end{array}
                     \right.\]
Note that
\[J_{U,V}:= \varepsilon_{U} J_X \varepsilon_{V} ~\text{and}~ J_U := J_{U,U}\]
for $U, V \subseteq X$.

The  \textit{Terwilliger algebra} of $(X,S)$ with respect to $x_{0}\in X$ is defined as a subalgebra of $Mat_X(\mathbb{C})$ generated by
\[ \{\sigma_{s} \mid s\in S\} \cup \{\varepsilon_{x_{0}s} \mid s \in S\} \]
(see \cite{terwilliger}).
The Terwilliger algebra will be denoted by $\mathcal{T}(X,S,x_{0})$ or $\mathcal{T}(S)$.
Since $\mathcal{A}(S)$ and $\mathcal{T}(S)$ are closed under transposed conjugate,
they are semisimple $\mathbb{C}$-algebras.
The set of irreducible characters of $\mathcal{T}(S)$ and $\mathcal{A}(S)$ will be denoted by
$\mathrm{Irr}(\mathcal{T}(S))$ and $\mathrm{Irr}(\mathcal{A}(S))$,
respectively. The \textit{trivial character} $1_{\mathcal{A}(S)}$ of $\mathcal{A}(S)$
is a map $\sigma_{s}\mapsto p_{ss^\ast}^{1_X}$ and the corresponding central
primitive idempotent is $|X|^{-1}J_{X}$. The \textit{trivial character} $1_{\mathcal{T}(S)}$ of $\mathcal{T}(S)$
corresponds to the central primitive idempotent \[\sum_{s\in S}n_{s}^{-1}\varepsilon_{x_{0}s}J_{X}\varepsilon_{x_{0}s}\]
of $\mathcal{T}(S)$. For $\chi\in \mathrm{Irr}(\mathcal{A}(S))$ or $\mathrm{Irr}(\mathcal{T}(S))$, $e_{\chi}$ will be the corresponding
central primitive idempotent of $\mathcal{A}(S)$ or $\mathcal{T}(S)$.
For convenience, we denote $\mathrm{Irr}(\mathcal{A}(S)) \setminus \{1_{\mathcal{A}(S)}\}$ and
$\mathrm{Irr}(\mathcal{T}(S)) \setminus \{1_{\mathcal{T}(S)}\}$ by $\mathrm{Irr}(\mathcal{T}(S))^\times$ and $\mathrm{Irr}(\mathcal{A}(S))^\times$, respectively.

Let $\mathcal{C}=(X,S)$ be a coherent configuration and $x \in X$. Denote by $S_x$ the set of basic relations of the smallest coherent configuration on $X$ such that
$$1_x \in S_x ~\text{and}~ S \subset S_x^\cup.$$
Then the coherent configuration $\mathcal{C}_x=(X,S_x)$ is called a \textit{one point extension} of $\mathcal{C}$.
It is easy to see that given $s, t, u \in S$, the set $xs$ is a union of fibers of $\mathcal{C}_x$
and the relation $u_{xs,xt}$ is a union of basic relations of $\mathcal{C}_x$.

\begin{rem} \label{rem:2}
The adjacency algebra of the one point extension $\mathcal{C}_x$ of a scheme $\mathcal{C}$ is related to $\mathcal{T}(X,S,x)$.
In fact, $\mathcal{T}(X,S,x) \subseteq \mathcal{A}(S_x)$.
\end{rem}

\subsection{Wreath products}
Let $(X,S)$ and $(Y,T)$ be schemes.
For $s\in S$, set
\[ \tilde{s}=\{((x,y),(x^{\prime},y)) \mid (x,x^{\prime})\in s, \ y \in Y\}. \]

For $t\in T$, set
\[ \bar{t}=\{((x,y),(x^{\prime},y^{\prime})) \mid x,x^{\prime}\in X, \ (y,y^{\prime})\in t\}.\]

Also set
\[ S\wr T=\{\tilde{s}\mid s\in S\}\cup\{\bar{t}\mid t\in T\setminus\{1_Y\}\}. \]

Then $(X\times Y, S\wr T)$ is a scheme called the \textit{wreath product} of $(X,S)$ by $(Y,T)$.

\vskip15pt
For the adjacency matrices, we have $\sigma_{\tilde{s}}=\sigma_{s}\otimes I_{Y}, \sigma_{\bar{t}}=J_{X}\otimes \sigma_{t}$.
Note that
\begin{eqnarray*}
(x_{0},y_{0})\tilde{s} &=(x_{0}s,y_{0}) &=\{(x,y_{0}) \mid x\in x_{0}s\}, \\
(x_{0},y_{0})\bar{t}   &=(X,y_{0}t)     &=\{(x,y) \mid x\in X, \ y\in y_{0}t\}
\end{eqnarray*}
and
\begin{eqnarray*}
\varepsilon_{(x_{0},y_{0})\tilde{s}}
& =\varepsilon_{x_{0}s}\otimes \varepsilon_{y_{0}1_Y},  \\
\varepsilon_{(x_{0},y_{0})\bar{t}}
& =\sum_{s\in S}\varepsilon_{x_{0}s}\otimes \varepsilon_{y_{0}t} &=I_{X}\otimes \varepsilon_{y_{0}t}.
\end{eqnarray*}

\subsection{3-equivalenced schemes}
A scheme $\mathcal{C}=(Y,T)$ is called \textit{3-equivalenced} if $T = \{ 1_Y \} \cup T_3$, where $T_3$ is the set of basic relations with valency $3$.

\begin{lem}[\cite{hkp}, Lemma 3.1]\label{lem:inter1}
Let $\mathcal{C}=(Y,T)$ be a 3-equivalenced scheme.
For all $u, v \in T^\times := T \setminus \{ 1_Y \}$ with $u \neq v^\ast$, one of the following holds:
\begin{enumerate}
\item $\sigma_u \sigma_v = \sigma_{w_1} + \sigma_{w_2} + \sigma_{w_3}$ for some distinct $w_1, w_2, w_3 \in T^\times$;
\item $\sigma_u \sigma_v = \sigma_{w_1} + 2\sigma_{w_2}$ for some distinct $w_1, w_2 \in T^\times$.
\end{enumerate}
\end{lem}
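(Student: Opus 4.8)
The plan is to expand the product on the basis of adjacency matrices, $\sigma_u\sigma_v=\sum_{w\in T}p_{uv}^w\sigma_w$, and to pin down the admissible multisets of coefficients $\{p_{uv}^w\}$. Since $u\neq v^\ast$, the identity relation $1_Y$ does not occur in the product (because $p_{uv}^{1_Y}\neq 0$ forces $u=v^\ast$), so every $w$ with $p_{uv}^w\neq 0$ lies in $T^\times$ and hence has valency $3$. Applying Lemma~\ref{lem:constant}(ii) with $n_u=n_v=3$ then gives $9=\sum_{w}p_{uv}^w\,n_w=3\sum_w p_{uv}^w$, so $\sum_w p_{uv}^w=3$. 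As the nonzero coefficients are positive integers summing to $3$, their multiset is one of $\{1,1,1\}$, $\{2,1\}$, or $\{3\}$, corresponding respectively to conclusion (i), conclusion (ii), and the single-term product $\sigma_u\sigma_v=3\sigma_w$. Thus the whole statement reduces to excluding this last possibility.

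To rule out $\sigma_u\sigma_v=3\sigma_w$, I would first read off what the coefficient $3$ means geometrically. For $(x,y)\in w$ one has $p_{uv}^w=|xu\cap yv^\ast|=3$, while $|xu|=n_u=3$ and $|yv^\ast|=n_{v^\ast}=n_v=3$; a full intersection of two $3$-element sets forces $xu=yv^\ast$. Now fix $x$ and put $A:=xu$ and $B:=xw$, both of size $3$. For every $y\in B$ we have $yv^\ast=A$, i.e. $(z,y)\in v$ for all $z\in A$ and all $y\in B$; since $n_v=3$ this means $zv=B$ for every $z\in A$. In other words, all three points of $A$ share the single $v$-neighbourhood $B$.

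The contradiction, which I expect to be the heart of the argument, comes from the internal structure of $A$. Pick distinct $z_1,z_2\in A$ and let $r\in T^\times$ be the basic relation with $(z_1,z_2)\in r$. The key point is that the quantity $|z_1v\cap z_2v|$ depends only on $r$ (it equals the intersection number $p_{vv^\ast}^r$), and here it equals $|B|=n_v$; consequently $z'v=z_1v=B$ for every $z'\in z_1 r$, so $z_1r\subseteq\{z:zv=B\}$. But $\{z:zv=B\}\subseteq yv^\ast=A$ for any $y\in B$, so $z_1r\subseteq A$. Since $r\neq 1_Y$ we have $z_1\notin z_1r$, hence $z_1r\subseteq A\setminus\{z_1\}$ and therefore $3=n_r=|z_1r|\le |A|-1=2$, a contradiction. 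This eliminates the multiset $\{3\}$ and leaves exactly the two stated possibilities. The main obstacle is thus verifying that $r$ preserves the common $v$-neighbourhood $B$ through the constancy of the intersection numbers in Lemma~\ref{lem:constant}; once that is in hand, the valency bound $n_r=3$ against a two-element set closes the case.
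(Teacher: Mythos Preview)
The paper does not prove this lemma; it is imported verbatim as \cite[Lemma~3.1]{hkp} and used as a black box, so there is no in-paper argument to compare yours against.

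That said, your proof is correct and self-contained. The reduction via $n_u n_v=\sum_w p_{uv}^w n_w$ (Lemma~\ref{lem:constant}(ii)) together with $p_{uv}^{1_Y}=0$ when $u\neq v^\ast$ cleanly yields $\sum_w p_{uv}^w=3$ with all contributing $w$ in $T^\times$, leaving only the multisets $\{1,1,1\}$, $\{2,1\}$, $\{3\}$. Your elimination of $\sigma_u\sigma_v=3\sigma_w$ is sound: from $p_{uv}^w=3=n_u=n_{v^\ast}$ you correctly deduce $xu=yv^\ast$ for every $(x,y)\in w$, hence $zv=B:=xw$ for every $z\in A:=xu$; the constancy $|z_1v\cap z'v|=p_{vv^\ast}^r$ for $z'\in z_1r$ then forces $z'v=B$, so $z_1r\subseteq A\setminus\{z_1\}$, contradicting $n_r=3>2$. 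Every step uses only the defining properties of intersection numbers and the $3$-equivalenced hypothesis, so the argument stands on its own even without access to the original source.
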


\begin{lem}[\cite{hkp}, Lemma 3.2]\label{lem:inter2}
Let $\mathcal{C}=(Y,T)$ be a 3-equivalenced scheme.
For all $u \in T^\times$, one of the following holds:
\begin{enumerate}
\item $\sigma_u \sigma_{u^\ast} = 3\sigma_{1_Y} + \sigma_{w} + \sigma_{w^\ast}$ for some $w \in T$ with $w \neq w^\ast$;
\item $\sigma_u \sigma_{u^\ast} = 3\sigma_{1_Y} + 2\sigma_{u}$ and $u = u^\ast$.
\end{enumerate}
\end{lem}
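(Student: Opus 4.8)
The plan is to read the coefficients of $\sigma_u\sigma_{u^\ast}=\sum_{w\in T}p_{uu^\ast}^w\sigma_w$ directly off the intersection numbers and pin them down with three elementary constraints. First I would record that $p_{uu^\ast}^{1_Y}=n_u=3$, since a point $z$ with $(x,z)\in u$ and $(z,x)\in u^\ast$ is exactly an element of $xu$. Next, applying Lemma~\ref{lem:constant}(ii) to the pair $(u,u^\ast)$ gives $9=n_un_{u^\ast}=\sum_{w}p_{uu^\ast}^w n_w = p_{uu^\ast}^{1_Y}+3\sum_{w\neq 1_Y}p_{uu^\ast}^w$, so that $\sum_{w\neq 1_Y}p_{uu^\ast}^w=2$. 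Finally, since $\sigma_s^t=\sigma_{s^\ast}$ we have $(\sigma_u\sigma_{u^\ast})^t=\sigma_u\sigma_{u^\ast}$, whence $p_{uu^\ast}^w=p_{uu^\ast}^{w^\ast}$ for every $w$; that is, the non-identity coefficients are invariant under $w\mapsto w^\ast$.

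Combining these, the multiset of non-identity coefficients is a partition of $2$ stable under $w\mapsto w^\ast$, which leaves exactly three shapes: (a) $\sigma_u\sigma_{u^\ast}=3\sigma_{1_Y}+\sigma_w+\sigma_{w^\ast}$ with $w\neq w^\ast$; (b) $\sigma_u\sigma_{u^\ast}=3\sigma_{1_Y}+2\sigma_w$ with $w=w^\ast$; or (c) $\sigma_u\sigma_{u^\ast}=3\sigma_{1_Y}+\sigma_{w_1}+\sigma_{w_2}$ with $w_1=w_1^\ast\neq w_2=w_2^\ast$. Shape (a) is precisely conclusion (i). To finish, I must show that shape (c) never occurs and that in shape (b) one necessarily has $w=u$ and $u=u^\ast$, which is then conclusion (ii).

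The hard part is this last elimination, and I expect it to require the local geometry of the valency-$3$ relation rather than generic identities: one checks that the counting and symmetry constraints above are consistent with all three shapes, so they cannot by themselves decide between them. My plan is to transfer each non-identity coefficient into a product covered by Lemma~\ref{lem:inter1}. By Lemma~\ref{lem:constant}(i) we have $p_{uu^\ast}^w n_w=p_{wu}^u n_u$, so a coefficient $c$ of $w$ in $\sigma_u\sigma_{u^\ast}$ equals $p_{wu}^u$, the coefficient of $u$ in $\sigma_w\sigma_u$. In shape (b) with $w\neq u$ one has $w\neq u^\ast$ (otherwise $u=w^\ast=w$, contradicting $w\neq u$), so Lemma~\ref{lem:inter1} applies to $\sigma_w\sigma_u$ and, since $p_{wu}^u=2$, forces the pattern $\sigma_w\sigma_u=2\sigma_u+\sigma_t$ for a single extra $t\in T^\times$.

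To convert this into a contradiction I would analyse how the three points of $xu$ are shared as common $u$-successors, using that distinct basic relations are disjoint, so that $xu\cap xw=\varnothing$ whenever $w\neq u$: this separates the $u$-neighbours of $x$ from the points that share two $u$-successors with $x$, and I expect the resulting incidence count on $xu$ to be incompatible with the extra relation $t$, thereby forcing $w=u$ and hence $u=u^\ast$. The same local count, applied to the two coefficient-$1$ relations of shape (c) via $p_{w_1u}^u=p_{w_2u}^u=1$, should show that sharing a single $u$-successor forces the antisymmetric pairing $w_2=w_1^\ast$, contradicting $w_1=w_1^\ast\neq w_2=w_2^\ast$ and so excluding (c). The delicate bookkeeping of how the three out-neighbours of $x$ distribute among the sharers is where I expect the real work to lie.
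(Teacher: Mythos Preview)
The paper does not prove this lemma; it is quoted verbatim from \cite{hkp} (as Lemma~3.2 there) with no argument reproduced here. So there is no in-paper proof to compare your proposal against.

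On your proposal itself: the three constraints you extract are correct and efficiently stated. The diagonal coefficient $p_{uu^\ast}^{1_Y}=3$, the identity $\sum_{w\neq 1_Y}p_{uu^\ast}^w=2$ from Lemma~\ref{lem:constant}(ii), and the symmetry $p_{uu^\ast}^w=p_{uu^\ast}^{w^\ast}$ from $(\sigma_u\sigma_{u^\ast})^t=\sigma_u\sigma_{u^\ast}$ do reduce the problem to your three shapes (a), (b), (c). The transfer $p_{uu^\ast}^w=p_{wu}^u$ via Lemma~\ref{lem:constant}(i) is valid, and you are right that Lemma~\ref{lem:inter1} then applies to $\sigma_w\sigma_u$ once $w\notin\{u,u^\ast\}$.

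The gap is that the elimination of (c) and the identification $w=u$ in (b) are only announced, not carried out. Your phrase ``I expect the resulting incidence count on $xu$ to be incompatible'' is precisely where the content lies, and the obvious double counts do \emph{not} give a contradiction: for instance, in shape (b) with $w\neq u$ one has $\sum_{y\in xw}|xu\cap yu|=3\cdot 2=6$ and equally $\sum_{z\in xu}p_{wu}^u=3\cdot 2=6$, so nothing breaks at that level. To force $w=u$ you will need to track \emph{which} pairs of points in $xu$ are shared, bring in the companion product $\sigma_{u^\ast}\sigma_u$ (which is not determined by $\sigma_u\sigma_{u^\ast}$ when $u\neq u^\ast$), and argue with the relation type on pairs inside $xu$; similarly for ruling out (c). This finer bookkeeping is exactly what the proof in \cite{hkp} supplies, and your sketch does not yet provide a substitute for it. The approach is reasonable, but as written it is a plan with the decisive step missing rather than a proof.
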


\begin{lem}[\cite{hkp}, Lemma 3.14]\label{lem:semiregular}
Let $\mathcal{C}=(Y,T)$ be a 3-equivalenced scheme with $|T| > 2$ and $y_{0} \in Y$.
Then the coherent configuration $(Y \setminus \{ y_{0} \}, (T_{y_{0}})_{Y \setminus \{ y_{0} \}})$ is semiregular.
\end{lem}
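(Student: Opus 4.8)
The plan is to translate semiregularity into a bound on intersection numbers based at $y_0$, to reduce the whole statement to a single exceptional configuration, and then to exclude that configuration by a separation argument. The fibers of the one point extension $\mathcal{C}_{y_0}=(Y,T_{y_0})$ are the singleton $\{y_0\}=y_01_Y$ together with the sets $y_0u$ with $u\in T^\times$, each of size $n_u=3$; since $|T|>2$ there are at least two of the latter. Every basic relation $r$ of $(Y\setminus\{y_0\},(T_{y_0})_{Y\setminus\{y_0\}})$ is contained in $y_0u\times y_0v$ for some $u,v\in T^\times$ and refines $w\cap(y_0u\times y_0v)$ for a unique $w\in T$. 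Hence the assertion is equivalent to showing $|ar|\le 1$ for all such $r$ and all $a\in y_0u$.

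First I would record the relevant valency. For $a\in y_0u$ one has $(a,y_0)\in u^\ast$, so the number of $b\in y_0v$ with $(a,b)\in w$ equals $p_{wv^\ast}^{u^\ast}$; by Lemma \ref{lem:constant}(i) together with $n_u=n_v=3$ this number is exactly $p_{u^\ast v}^{w}$ when $w\in T^\times$, while for $w=1_Y$ it is $\delta_{u,v}\le 1$. Next I would apply Lemma \ref{lem:inter1} to $\sigma_{u^\ast}\sigma_v$ when $u\ne v$ and Lemma \ref{lem:inter2} to $\sigma_{u^\ast}\sigma_u$ when $u=v$. In either case the coefficient $p_{u^\ast v}^{w}$ of a relation $w\in T^\times$ is at most $2$, and the value $2$ is attained for at most one $w$. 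Therefore $|a(w\cap(y_0u\times y_0v))|\le 2$, so $|ar|\le 2$ for every basic relation $r$, and semiregularity can fail only in the ``multiplicity two'' situation, where $a$ has exactly two $w$-neighbours $b,b'$ inside $y_0v$ and they are \emph{not} separated by $T_{y_0}$, that is, $(a,b)$ and $(a,b')$ lie in the same basic relation. It thus suffices to exclude this situation.

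Proving that the one point extension always separates $b$ and $b'$ is the heart of the matter and the step I expect to be the main obstacle. A clean route is the dichotomy underlying the two open problems in the introduction: if $(Y,T)$ is imprimitive, then Lemma \ref{lem:impri} already yields the conclusion, so the genuinely new content is the primitive case. There I would try to separate $(a,b)$ from $(a,b')$ inside the coherent closure by exhibiting relations $s,s'\in T$ for which the common-neighbour counts $|as\cap bs'|$ and $|as\cap b's'|$ differ; because every valency equals $3$, these counts are tightly constrained, and the expectation is that a coincidence of all of them forces a forbidden subconfiguration or contradicts the multiplication rules of Lemmas \ref{lem:inter1} and \ref{lem:inter2}. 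Carrying this out appears to require the finer structural analysis of primitive $3$-equivalenced schemes, and this is where the real work, and the likely use of intermediate lemmas, lies.
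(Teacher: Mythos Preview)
The paper does not prove this lemma: it is imported verbatim from \cite[Lemma~3.14]{hkp} and stated without argument here, so there is no in-paper proof to compare your attempt against.

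As for your proposal itself, the preliminary reductions are sound. Semiregularity of $(T_{y_0})_{Y\setminus\{y_0\}}$ is indeed equivalent to the claim that whenever $p_{u^\ast v}^{w}=2$ for some $u,v,w\in T^\times$, the two $w$-neighbours $b,b'\in y_0v$ of a point $a\in y_0u$ are separated by the coherent closure at $y_0$; your computation $|a\,(w\cap(y_0u\times y_0v))|=p_{wv^\ast}^{u^\ast}=p_{u^\ast v}^{w}$ via Lemma~\ref{lem:constant}(i) is correct, Lemmas~\ref{lem:inter1} and~\ref{lem:inter2} do cap these numbers at $2$, and the imprimitive case is disposed of by Lemma~\ref{lem:impri}. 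However, the primitive case---which you yourself flag as ``the real work''---is the entire content of the lemma beyond Lemma~\ref{lem:impri}, and your proposal supplies only a heuristic (look for $s,s'\in T$ with $|as\cap bs'|\ne|as\cap b's'|$) rather than an argument. Nothing in the present paper fills this gap; the separation is established in \cite{hkp} through a sequence of intermediate structural lemmas specific to the $3$-equivalenced setting that are not reproduced here. As written, your outline correctly isolates the difficulty but does not resolve it.
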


\begin{thm}\label{thm:coherent-terwilliger}
Let $\mathcal{C}=(Y,T)$ be a 3-equivalenced scheme and $y_{0} \in Y$.
Then $\mathcal{A}(T_{y_{0}}) = \mathcal{T}(Y,T,y_{0})$.
\end{thm}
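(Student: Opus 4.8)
The plan is to prove the two inclusions. By Remark \ref{rem:2} we already have $\mathcal{T}(Y,T,y_0) \subseteq \mathcal{A}(T_{y_0})$, so the whole content of the theorem is the reverse inclusion $\mathcal{A}(T_{y_0}) \subseteq \mathcal{T}(Y,T,y_0)$. Since $\mathcal{A}(T_{y_0})$ is spanned by the adjacency matrices $\sigma_w$ for $w \in T_{y_0}$, it suffices to show that every such $\sigma_w$ lies in $\mathcal{T}(Y,T,y_0)$. The natural strategy is to analyze the structure of the basic relations $w \in T_{y_0}$ of the one point extension in terms of the original relations of $T$ and the point $y_0$, and then realize each $\sigma_w$ as a matrix built out of the generators $\{\sigma_t : t \in T\}$ and $\{\varepsilon_{y_0 t} : t \in T\}$ of the Terwilliger algebra.

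First I would recall, as noted after Remark \ref{rem:2}, that each set $y_0 t$ (for $t \in T$) is a union of fibers of $\mathcal{C}_{y_0}$, and for $s,t,u \in T$ the relation $u \cap (y_0 s \times y_0 t)$ is a union of basic relations of $\mathcal{C}_{y_0}$. Thus every $w \in T_{y_0}$ refines some $u \cap (y_0 s \times y_0 t)$. The key structural input is Lemma \ref{lem:semiregular}: off the fiber $\{y_0\}$, the configuration $(Y \setminus \{y_0\}, (T_{y_0})_{Y\setminus\{y_0\}})$ is semiregular, meaning $|yw| \le 1$ for all points $y$ and relations $w$ in the restriction. Semiregularity forces the valencies of the off-diagonal basic relations of $\mathcal{C}_{y_0}$ to be at most $1$, so each such $\sigma_w$ is a $\{0,1\}$-matrix with at most one nonzero entry per row. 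The plan is then to use the interior-intersection-number identities of Lemmas \ref{lem:inter1} and \ref{lem:inter2} to see that the products $\sigma_u \sigma_v$ of original adjacency matrices, when cut down by the idempotents $\varepsilon_{y_0 s}$, split into exactly the refined pieces $\sigma_w$. Concretely, for $w$ contained in $u \cap (y_0 s \times y_0 t)$ I would express $\sigma_w = \varepsilon_{y_0 s}\, \sigma_u\, \varepsilon_{y_0 t}$ whenever this product already has $0/1$ entries (which happens exactly when the relevant interior intersection number is $1$), and otherwise disentangle the multiplicities coming from the coefficient-$2$ terms in Lemmas \ref{lem:inter1}(ii) and \ref{lem:inter2}(ii) by taking suitable products and differences of such cut-down matrices.

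The concrete mechanism I expect to use is the following: for a fixed fiber $y_0 s$ of $\mathcal{C}_{y_0}$, the matrices $\varepsilon_{y_0 s}\,\sigma_u\,\varepsilon_{y_0 t}$ for varying $u$ give the adjacency matrices of the relations $u \cap (y_0 s \times y_0 t)$, and by the remark after Lemma \ref{lem:constant} the number of distinct nonempty such pieces equals $|u^\ast v|$-type counts governed by the 3-equivalenced intersection numbers. Because the valencies are all $3$, the only obstructions to $\varepsilon_{y_0 s}\,\sigma_u\,\varepsilon_{y_0 t}$ being a single basic relation of $\mathcal{C}_{y_0}$ are the multiplicity-$2$ configurations in Lemmas \ref{lem:inter1}(ii) and \ref{lem:inter2}(ii); in those cases one basic relation $w$ appears with coefficient $2$, and I would recover $\sigma_w$ by subtracting off the coefficient-$1$ pieces (which are themselves already shown to lie in $\mathcal{T}(Y,T,y_0)$) and dividing by $2$. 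This inductive bootstrapping, starting from the relations already equal to a cut-down product and peeling off multiplicities, should express every $\sigma_w$, $w \in T_{y_0}$, as a polynomial in the generators.

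The main obstacle I anticipate is the careful case analysis needed to confirm that the refined relations $w \in T_{y_0}$ are \emph{exactly} the nonempty matrices $\varepsilon_{y_0 s}\,\sigma_u\,\varepsilon_{y_0 t}$ (after the multiplicity corrections), with no further splitting — that is, that the one point extension does not introduce relations finer than these products. This is precisely where semiregularity (Lemma \ref{lem:semiregular}) is essential: it pins down the valencies to be $\le 1$ off the base point and thereby guarantees that the cut-down products cannot decompose any further, so the algebra generated by the $\varepsilon_{y_0 s}\,\sigma_u\,\varepsilon_{y_0 t}$ is already closed and coincides with $\mathcal{A}(T_{y_0})$. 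Handling the base point $y_0$ itself (the fiber $\{y_0\}$ and the relations joining it to the rest) is a separate, easier sub-case that I would treat directly using $\varepsilon_{y_0 1_Y} = \varepsilon_{\{y_0\}}$ and the matrices $\varepsilon_{\{y_0\}}\,\sigma_u$, $\sigma_u\,\varepsilon_{\{y_0\}}$.
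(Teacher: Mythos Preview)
The paper does not argue elementarily at all: its entire proof of this theorem is the two-line citation that every $3$-equivalenced scheme is Frobenius \cite[Theorem~1.1]{hkp} and that for such (schurian, pseudocyclic) schemes the adjacency algebra of the one point extension coincides with the Terwilliger algebra \cite[Section~8.5]{mp1}. Your direct combinatorial approach is therefore a genuinely different route, and the overall strategy --- reduce to showing each $\sigma_w$ with $w\in T_{y_0}$ lies in $\mathcal T(Y,T,y_0)$, using Lemma~\ref{lem:semiregular} to pin down the shape of those $w$ --- is the right one.

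The gap is in your ``disentangling'' step. The coefficient $2$ in $\sigma_u\sigma_v=\sigma_{w_1}+2\sigma_{w_2}$ is a statement about a \emph{product} of adjacency matrices of $T$; it does not mean that any cut-down block has an entry equal to $2$. Every $\varepsilon_{y_0 s}\sigma_u\varepsilon_{y_0 t}$ is a $\{0,1\}$-matrix, and what the multiplicity-$2$ cases of Lemmas~\ref{lem:inter1}(ii) and~\ref{lem:inter2}(ii) actually translate to on the extension side is that the corresponding block has constant row sum $2$; by semiregularity it is then the sum of \emph{two distinct} basic relations of $T_{y_0}$, i.e.\ two different bijections $y_0 s\to y_0 t$. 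Your recipe ``subtract off the coefficient-$1$ pieces and divide by $2$'' cannot separate these two bijections, since neither is a scalar multiple of the other. To finish along your lines you would have to manufacture, inside $\mathcal T(Y,T,y_0)$, a nontrivial permutation of a single fibre $y_0 s$ and then transport it via the valency-$1$ blocks to every other fibre; but in case~(ii) of Lemma~\ref{lem:inter2} the only $T$-blocks on $y_0 s\times y_0 s$ are the identity and the row-sum-$2$ matrix, so the missing cyclic shift must be built from products routed through other fibres, and you have not argued that this can always be done. That is precisely the difficulty the paper outsources to the Frobenius structure theorem.
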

\begin{proof}
It follows from \cite[Theorem 1.1]{hkp} and \cite[Section 8.5]{mp1}
that $\mathcal{A}(T_{y_{0}}) = \mathcal{T}(Y,T,y_{0})$.
\end{proof}

\section{Wreath products by 3-equivalenced schemes}\label{sec:main}
Let $(X,S)$ and $(Y,T)$ be schemes. Fix $x_{0}\in X$ and
$y_{0}\in Y$, and consider $(X\times Y, S\wr T)$ and $\mathcal{T}(X\times Y, S \wr T, (x_0, y_0))$. In
the rest of this section, we assume that $(Y,T)$ is a 3-equivalenced scheme with $|T| > 2$.

\subsection{The restriction of $\mathcal{T}(S \wr T)$ to $X \times (Y\setminus \{y_0\})$}
The contents of this subsection are given in \cite{kim}.

The Terwilliger algebra $\mathcal{T}(S \wr T)$ is generated by
\[\{ J_{X}\otimes \sigma_{t}, I_{X}\otimes \varepsilon_{y_{0}t} \mid t \in T\setminus \{1_Y \} \}
\cup \{ \sigma_{s}\otimes I_{Y}, \varepsilon_{x_{0}s}\otimes \varepsilon_{\{y_{0}\}} \mid s \in S \}.\]

Since $\sum_{s \in S} \varepsilon_{x_{0}s}\otimes \varepsilon_{\{y_{0}\}} = I_{X}\otimes \varepsilon_{\{y_{0}\}}$ and
$\sum_{s \in S} \sigma_{s}\otimes I_{Y} = J_{X}\otimes I_{Y}$, we consider a subalgebra $\mathcal{U}$ generated by
\[ \{ J_{X}\otimes \sigma_{t}, I_{X}\otimes \varepsilon_{y_{0}t} \mid t \in T \}. \]

It is easy to see that $\mathcal{U}$ is generated by
\[\{ |X|^{-1}J_{X}\otimes \varepsilon_{y_{0}t_1} \sigma_{t} \varepsilon_{y_{0}t_2} \mid t_1, t_2 \in T \}\]
and isomorphic to $\mathcal{T}(T)$.
So by Theorem \ref{thm:coherent-terwilliger}, a basis $B(\mathcal{U})$ of $\mathcal{U}$ can be determined by $\mathcal{C}(\mathcal{T}(T))$, i.e. $B(\mathcal{U}) = \{ J_{X}\otimes \sigma_{c} \mid c \in \mathcal{R}\}$,
where $\mathcal{R}$ is the set of basic relations of $\mathcal{C}(\mathcal{T}(T))$.

\vskip15pt
We consider \[ (\varepsilon_{X}\otimes \varepsilon_{Y\setminus  \{y_{0}\}}) \mathcal{T}(S \wr T) (\varepsilon_{X}\otimes \varepsilon_{Y\setminus  \{y_{0}\}}).\]

Since $\mathcal{T}(S \wr T)$ is generated by
$B(\mathcal{U}) \cup \{ \sigma_{s}\otimes I_{Y}, \varepsilon_{x_{0}s}\otimes \varepsilon_{\{y_{0}\}} \mid s \in S \}$,
$(\varepsilon_{X}\otimes \varepsilon_{Y\setminus  \{y_{0}\}}) \mathcal{T}(S \wr T) (\varepsilon_{X}\otimes \varepsilon_{Y\setminus  \{y_{0}\}})$
is generated by
\[ \{ J_{X}\otimes \sigma_{c} \mid c \in \mathcal{R}_{Y\setminus \{y_0\}} \} \cup \{ \sigma_{s}\otimes I_{Y\setminus \{y_0\}} \mid s \in S\}. \]

Thus, we can determine a basis of $(\varepsilon_{X}\otimes \varepsilon_{Y\setminus  \{y_{0}\}}) \mathcal{T}(S \wr T) (\varepsilon_{X}\otimes \varepsilon_{Y\setminus  \{y_{0}\}})$ with respect to the set of basic relations of $\mathcal{C}(\mathcal{T}(T))_{Y\setminus \{y_0\}}$.

\subsection{A basis of $\mathcal{A}(T_{y_{0}}) = \mathcal{T}(Y, T, y_0)$}

By Theorem \ref{thm:coherent-terwilliger}, we have $\mathcal{A}(\mathcal{C}_{y_{0}}) = \mathcal{T}(Y, T, y_0)$.

Let $W$ be $\{ r \in T_{y_{0}} \mid |y_0r|=3\}$.
In this paper, we shall say that the point set of $T_{y_{0}}$ is \textit{well-ordering} if
for all $r, r' \in W$ and $t \in T_{y_{0}}$,
the adjacency matrix of $t \cap (y_{0}r \times y_{0}r') \neq \emptyset$ is one of the followings:

\begin{equation}\label{A}
\begin{bmatrix}
1 & 0 & 0 \\
0 & 1 & 0 \\
0 & 0 & 1
\end{bmatrix}
;
\quad
\begin{bmatrix}
0 & 1 & 0 \\
0 & 0 & 1 \\
1 & 0 & 0
\end{bmatrix}
;
\quad
\begin{bmatrix}
0 & 0 & 1 \\
1 & 0 & 0 \\
0 & 1 & 0
\end{bmatrix}
,
\end{equation}
where the adjacency matrix of $t \cap (y_{0}r \times y_{0}r') \neq \emptyset$ means a $3 \times 3$ matrix
whose rows and columns are indexed by $y_0r$ and $y_0r'$, respectively.

\begin{lem}\label{lem:order}
Let $(Y,T)$ be a $3$-equivalenced scheme, $y_0 \in Y$ and $u, v, w \in T_3$.
Assume $|w \cap (y_0u \times y_0v)| = 3$.
Then there exist a permutation $\tau$ on $Y$ and a scheme $(Y^\tau, T^\tau)$ isomorphic to $(Y, T)$ such that
$\tau|_{Y \setminus y_0v} = id$ and the adjacency matrix of $(w \cap (y_0u \times y_0v))^\tau$ is the identity matrix,
where $id$ is the identity permutation.
\end{lem}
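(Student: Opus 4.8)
The plan is to reduce the statement to a single combinatorial fact: the $3\times 3$ matrix $M$ recording $w\cap(y_0u\times y_0v)$ is a \emph{permutation} matrix. Once that is known, the required $\tau$ is just the relabeling of the three points of $y_0v$ that sorts $M$ into the identity, and the isomorphism $(Y^\tau,T^\tau)\simeq(Y,T)$ is automatic, since relabeling the point set of a scheme by any bijection produces an isomorphic scheme. First I would record the sizes: as $u,v\in T_3$ we have $|y_0u|=n_u=3$ and $|y_0v|=n_v=3$, so $M$ is genuinely $3\times 3$ with entries in $\{0,1\}$ and, by hypothesis, exactly three $1$'s. Write $y_0u=\{a_1,a_2,a_3\}$ and $y_0v=\{b_1,b_2,b_3\}$, with $M_{ij}=1$ iff $(a_i,b_j)\in w$.

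The key step is to compute the line sums of $M$ from the coherent-configuration axioms. For the column indexed by $b_j$ we have $(y_0,b_j)\in v$, and its sum is the number of $z$ with $(y_0,z)\in u$ and $(z,b_j)\in w$; by definition this is the intersection number $p^{v}_{uw}$, which is independent of $j$. Summing over the three columns gives $3\,p^{v}_{uw}=|w\cap(y_0u\times y_0v)|=3$, hence every column sum is $1$. Symmetrically, the row indexed by $a_i$ satisfies $(a_i,y_0)\in u^\ast$, and its sum counts the $b$ with $(a_i,b)\in w$ and $(b,y_0)\in v^\ast$, i.e.\ equals $p^{u^\ast}_{w\,v^\ast}$, again independent of $i$; the same total forces every row sum to be $1$. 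Thus $M$ is a $0/1$ matrix all of whose line sums equal $1$, that is, a permutation matrix. I note that this argument uses only the valencies of $u$ and $v$ together with the regularity of the intersection numbers, and does not invoke semiregularity (Lemma \ref{lem:semiregular}).

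With $M$ a permutation matrix there is a unique bijection $\pi$ of $\{1,2,3\}$ with $(a_i,b_{\pi(i)})\in w$. Since $u\ne v$, the sets $y_0u$ and $y_0v$ are disjoint, so I may define $\tau$ to fix $Y\setminus y_0v$ pointwise and to send $b_{\pi(i)}\mapsto b_i$ on $y_0v$; this is a permutation of $Y$ with $\tau|_{Y\setminus y_0v}=\mathrm{id}$ that moves only the column points, leaving every $a_i$ fixed. Setting $T^\tau=\{s^\tau\mid s\in T\}$, the pair $(Y^\tau,T^\tau)$ is isomorphic to $(Y,T)$ via $\tau$, because a relabeling preserves all the defining conditions of a scheme. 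Finally, the pairs of $w\cap(y_0u\times y_0v)$ are exactly the $(a_i,b_{\pi(i)})$, and $\tau$ carries them to $(a_i,b_i)$; reading off with rows indexed by $y_0u$ and columns by $(y_0v)^\tau$ in the matching order, the adjacency matrix of $(w\cap(y_0u\times y_0v))^\tau$ is the identity, as claimed.

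The main obstacle is one of care rather than depth, and it concerns the hypothesis $u\ne v$, which I treat as the standing assumption (this is the case needed to set up the well-ordering). If instead $u=v$, then $y_0u=y_0v$ and any $\tau$ with $\tau|_{Y\setminus y_0v}=\mathrm{id}$ permutes the common point set, acting on $M$ by \emph{conjugation}; since $w\ne 1_Y$ forces the diagonal of $M$ to vanish, $\pi$ is then a $3$-cycle, and no conjugation can turn a $3$-cycle into the identity. Thus the permutation-matrix computation and the construction of $\tau$ are the two things to verify, and both go through cleanly once the $u\ne v$ hypothesis is in force.
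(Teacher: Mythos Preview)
Your argument is correct and follows the same approach as the paper: permute the three points of $y_0v$ so that the $3\times 3$ matrix becomes the identity, and transport the scheme along this bijection. The paper's proof is terser---it simply asserts that such a $\tau$ can be chosen without verifying that the matrix is a permutation matrix---so your intersection-number computation of the line sums fills a gap the paper leaves implicit; your caveat about the degenerate case $u=v$ is also well taken, since the lemma is only ever applied with $u\ne v$ in Proposition~\ref{lem:well-order}.
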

\begin{proof}
If the adjacency matrix of $w \cap (y_0u \times y_0v)$ is the identity matrix, then we set $\tau = id$.

Assume that the adjacency matrix of $w \cap (y_0u \times y_0v)$ is not the identity matrix.
Then we can choose a permutation $\tau$ on $Y$ such that
$\tau|_{Y \setminus y_0v} = id$ and the adjacency matrix of $(w \cap (y_0u \times y_0v))^\tau$ is the identity matrix.
It follows from the definition that $(Y^\tau, T^\tau)$ is a scheme isomorphic to $(Y, T)$.
\end{proof}

\begin{prop}\label{lem:well-order}
For a given $3$-equivalenced scheme $(Y, T)$,
there exists a scheme $(Y', T')$ isomorphic to $(Y, T)$ such that
the point set of $T'_{y_{0}}$ is well-ordering.
\end{prop}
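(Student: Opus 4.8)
The plan is to reduce the statement to a $\pm 1$-consistency problem on the size-$3$ fibers and then solve it. By Lemma~\ref{lem:semiregular} the restriction of $T_{y_0}$ to $Y\setminus\{y_0\}$ is semiregular, so for $r,r'\in W$ any nonempty block $t\cap(y_0r\times y_0r')$ has out- and in-valencies that are constant along each fiber and at most $1$, hence equal to $1$; as both fibers have size $3$, each nonempty block is a genuine $3\times 3$ permutation matrix. Since the basic relations of $T_{y_0}$ contained in $y_0r\times y_0r'$ partition this $3\times 3$ all-ones block, there are exactly three of them, pairwise distinct and disjoint, so the associated permutations form a \emph{sharply transitive} set of permutations of the three points. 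A direct finite check shows that the only sharply transitive subsets of $S_3$ of size three are the two cosets of the alternating group $A_3$, and that the three matrices in \eqref{A} are exactly the elements of $A_3$. Thus the point set of $T_{y_0}$ is well-ordering if and only if, after relabelling, for every pair $r,r'\in W$ the coset of blocks between $y_0r$ and $y_0r'$ is $A_3$, i.e.\ every block is an even permutation matrix.

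Next I would encode the obstruction by a sign. Form the graph on $W$ in which $r$ and $r'$ are joined when some block between $y_0r$ and $y_0r'$ is nonempty, and attach to each edge the value $\nu(r,r')=\mathrm{sgn}$ of the common parity of its three blocks (well defined, since a coset of $A_3$ lies in a single parity class). Transposing blocks shows $\nu$ is symmetric. The crucial property is multiplicativity: given nonempty blocks $A$ from $y_0r$ to $y_0r'$ and $B$ from $y_0r'$ to $y_0r''$, the matrix product $AB$ is again a $\{0,1\}$ permutation matrix, and because $\mathcal{A}(T_{y_0})$ is closed under multiplication, the restriction of $AB$ to $y_0r\times y_0r''$ is a nonnegative combination of the (disjoint) blocks there; being itself a single permutation matrix, it must equal one of them. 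Hence $y_0r$ and $y_0r''$ are adjacent and, since the sign of a composite of permutations is the product of the signs, $\nu(r,r'')=\nu(r,r')\nu(r',r'')$. Taking $r''=r$ and noting that the restriction of $1_{y_0r}$ is the identity block gives $\nu(r,r)=+1$, so telescoping shows the product of signs around any closed walk equals $+1$.

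Finally I would solve the system. The cocycle identity together with $\nu(r,r)=+1$ guarantees a function $\eta\colon W\to\{\pm1\}$ with $\eta(r)\eta(r')=\nu(r,r')$ on every edge: take a spanning forest of each connected component, put $\eta=+1$ at the roots, and propagate along edges; consistency on the remaining edges is precisely the vanishing of the sign-product on cycles established above. For each $r\in W$ choose a permutation $\alpha_r$ of $y_0r$ with $\mathrm{sgn}(\alpha_r)=\eta(r)$, and let $\tau$ be the permutation of $Y$ acting as $\alpha_r$ on each $y_0r$ and as the identity on $y_0$ and on every fiber of size $1$. Relabelling sends a block $\pi$ between $y_0r$ and $y_0r'$ to $\alpha_r\,\pi\,\alpha_{r'}^{-1}$, of sign $\eta(r)\nu(r,r')\eta(r')=+1$, so in $(Y^\tau,T^\tau)$ every block between size-$3$ fibers is an even permutation matrix, that is, one of the matrices in \eqref{A}. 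Exactly as in Lemma~\ref{lem:order}, a permutation of $Y$ fixing $y_0$ produces a scheme isomorphic to $(Y,T)$, so $(Y',T'):=(Y^\tau,T^\tau)$ has the desired property.

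The step I expect to be the real obstacle is the multiplicativity of $\nu$, since it is what upgrades the purely local data (the parity of each coset) to a genuine cocycle that can be killed by relabelling. Its justification uses the coherent-configuration axioms in an essential way—closure of the adjacency algebra under multiplication forces a product of two permutation-matrix blocks to be a single block—so the semiregularity granted by Lemma~\ref{lem:semiregular} is carrying the argument. The remaining ingredients, namely the classification of sharply transitive subsets of $S_3$ and the solution of the resulting coboundary equation, are elementary.
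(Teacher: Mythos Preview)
Your argument is correct and reaches the same conclusion as the paper, but the route is organized differently. The paper proceeds concretely: using Lemmas~\ref{lem:inter1} and~\ref{lem:inter2} it picks, for each consecutive pair $y_0t_i,\,y_0t_{i+1}$, a relation of $T$ whose restriction is a $3\times 3$ permutation matrix, applies Lemma~\ref{lem:order} to force that block to be the identity, and then observes that composing these identity blocks along the chain (using semiregularity, Lemma~\ref{lem:semiregular}) produces an identity block between every pair of fibers; the presence of the identity in each triple of blocks then pins the coset down to $A_3$. Your version abstracts this into a $\{\pm1\}$-valued edge function $\nu$, proves it is multiplicative (your key step, which is exactly the ``product of blocks is a block'' fact the paper uses when composing along the chain), and kills it as a coboundary via a spanning tree---which is precisely what the paper's path $t_1,\dots,t_l$ is doing implicitly. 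The payoff of your formulation is that it isolates semiregularity (Lemma~\ref{lem:semiregular}) as the only structural input, bypassing the product formulas in Lemmas~\ref{lem:inter1}--\ref{lem:inter2}, and makes transparent why a single relabelling suffices rather than an iterative one; the paper's version has the virtue of being completely explicit about which blocks become identities. Two minor remarks: the graph you build on $W$ is in fact complete (every pair of fibers is joined by three basic relations of $T_{y_0}$), so the spanning-forest step is simpler than stated; and your identification of the three blocks with a sharply transitive subset of $S_3$, hence a coset of $A_3$, is exactly the observation the paper leaves implicit in its last sentence.
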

\begin{proof}
Since $|u^\ast v|= 2$ or $3$ for $u, v \in T^\times$, there exists at least one basic relation $t \in T$
such that $|t \cap (y_0u \times y_0v)|=3$.
We denote all elements of $T^\times$ by $t_1, t_2, \dotsc, t_l$, where $l = \frac{|Y| - 1}{3}$.

First, we take an element $t \in T^\times$ such that $|t \cap (y_0t_1 \times y_0t_2)|=3$.
Replacing $u, v$ and $w$ by $t_1, t_2$ and $t$,
we apply Lemma \ref{lem:order} for them.
Then we can obtain a scheme $(Y^{\tau_1}, T^{\tau_1})$ isomorphic to $(Y, T)$.

Again we denote all elements of $(T^{\tau_1})^\times$ by $t_1^{\tau_1}, t_2^{\tau_1}, \dotsc, t_l^{\tau_1}$.
We take an element $t' \in (T^{\tau_1})^\times$ such that $|t' \cap (y_0t_2^{\tau_1} \times y_0t_3^{\tau_1})|=3$.
Replacing $u, v$ and $w$ by $t_2^{\tau_1}, t_3^{\tau_1}$ and $t'$,
we apply Lemma \ref{lem:order} for them.
Then we can obtain a scheme $(Y^{\tau_1 \tau_2}, T^{\tau_1 \tau_2})$ isomorphic to $(Y^{\tau_1}, T^{\tau_1})$.




\vskip10pt
Repeating the above argument, we can obtain a scheme $(Y', T') \simeq (Y, T)$ with the following property:
For each $1 \leq i \leq l-1$, there exists the only basic relation $t_{m_i}' \in T'$ such that
the adjacency matrix of $t_{m_i}' \cap (y_0t'_i \times y_0t'_{i+1})$ is the identity matrix, 
where $(T')^\times = \{t_1', t_2', \dotsc, t_l' \}$ and $1 \leq m_i \leq l$.

We denote $t_{m_i}' \cap (y_0t'_i \times y_0t'_{i+1})$ by $\overline{t_{m_i}'}$.
Then $T'_{y_{0}}$ contains $\{ \overline{t_{m_i}'} \mid 1 \leq i \leq l-1 \}$.
We can check that $\overline{t_{m_i}'} ~ \overline{t_{m_{i+1}}'} \cdots \overline{t_{m_j}'} \in T'_{y_{0}}$ for $1 \leq i \leq j \leq l-1$.
The adjacency matrix of $\overline{t_{m_i}'} ~ \overline{t_{m_{i+1}}'} \cdots \overline{t_{m_j}'}$ is the identity matrix.
This and Lemma \ref{lem:semiregular} imply that the point set of $T'_{y_{0}}$ is well-ordering.
\end{proof}

\vskip10pt
By Proposition \ref{lem:well-order},
from now on, we assume that the point set of $T_{y_{0}}$ is well-ordering.

\subsection{Central primitive idempotents of $\mathcal{T}(X\times Y, S \wr T, (x_0, y_0))$}

Set $F^{(t)} = (X,y_{0}t)$ and
$U^{(t)}=(S\wr T)_{F^{(t)}}$ for $t\in T$.

\vskip15pt
For $\chi \in \mathrm{Irr}(\mathcal{T}(U^{(1_Y)})) \setminus \{ 1_{\mathcal{T}(U^{(1_Y)})} \}$, define
$$\tilde{e}_{\chi} = e_{\chi}\otimes \varepsilon_{\{y_{0}\}}\in \mathcal{T}(S\wr T).$$

For $t \in T_{3}$ and
$\psi \in \mathrm{Irr}(\mathcal{A}(S)) \setminus \{1_{\mathcal{A}(S)}\}$, define
$$\hat{e}_{\psi} = e_{\psi}\otimes \varepsilon_{y_{0}t} \in \mathcal{T}(S\wr T).$$

It is easy to see that they are idempotents of $\mathcal{T}(S\wr T)$.

\begin{lem}[\cite{hkm}, Lemma 4.2 and 4.4]\label{lem:32}
For $\chi \in \mathrm{Irr}(\mathcal{T}(U^{(1_Y)})) \setminus \{ 1_{\mathcal{T}(U^{(1_Y)})} \}$,
$\tilde{e}_{\chi}$ is a central primitive idempotent of $\mathcal{T}(S \wr T)$.
\end{lem}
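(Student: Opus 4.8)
The plan is to verify the three required properties in turn: that $\tilde{e}_{\chi}$ is an idempotent lying in $\mathcal{T}(S\wr T)$, that it is central, and that it is primitive, with the last point being the substantive one. Throughout I use that the restriction $U^{(1_Y)}$ is isomorphic to $(X,S)$, so that $\mathcal{T}(U^{(1_Y)}) \cong \mathcal{T}(X,S,x_0)$ and $e_{\chi}$ is a primitive central idempotent of the latter, regarded as a matrix in $Mat_X(\mathbb{C})$.

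For membership and idempotency, I first record that every listed generator is a simple tensor $M\otimes N$, so $\mathcal{T}(S\wr T)$ is spanned by simple tensors, with the products factoring as $(\prod_i M_i)\otimes(\prod_i N_i)$. Since $\sigma_s\otimes\varepsilon_{\{y_0\}} = (\sigma_s\otimes I_Y)(I_X\otimes\varepsilon_{\{y_0\}})$ and $\varepsilon_{x_0 s}\otimes\varepsilon_{\{y_0\}}$ both lie in $\mathcal{T}(S\wr T)$ (using $I_X\otimes\varepsilon_{\{y_0\}}=\sum_s \varepsilon_{x_0 s}\otimes\varepsilon_{\{y_0\}}$), and since $A\mapsto A\otimes\varepsilon_{\{y_0\}}$ is multiplicative on $\mathcal{T}(U^{(1_Y)})$ because $\varepsilon_{\{y_0\}}^2=\varepsilon_{\{y_0\}}$, it follows that $\tilde{e}_{\chi}=e_{\chi}\otimes\varepsilon_{\{y_0\}}$ lies in $\mathcal{T}(S\wr T)$; idempotency is immediate from $e_{\chi}^2=e_{\chi}$.

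For centrality I check commutation with each of the four families of generators. Against $\sigma_s\otimes I_Y$ and $\varepsilon_{x_0 s}\otimes\varepsilon_{\{y_0\}}$ this is immediate, as $e_{\chi}$ is central in $\mathcal{T}(X,S,x_0)$ and hence commutes with $\sigma_s$ and $\varepsilon_{x_0 s}$. Against $I_X\otimes\varepsilon_{y_0 t}$ with $t\neq 1_Y$ both products vanish because $\varepsilon_{\{y_0\}}\varepsilon_{y_0 t}=0$, since $y_0\notin y_0t$. Against $J_X\otimes\sigma_t$ the key computation is $e_{\chi}J_X = J_X e_{\chi}=0$: as $\chi$ is nontrivial, $e_{\chi}$ is orthogonal to the trivial idempotent $e_0=\sum_s n_s^{-1}\varepsilon_{x_0 s}J_X\varepsilon_{x_0 s}$, and since $e_0$ is symmetric and fixes the all-ones vector $\mathbf{1}$, one gets $e_{\chi}\mathbf{1}=0=\mathbf{1}^t e_{\chi}$, whence both products are zero because $J_X=\mathbf{1}\mathbf{1}^t$.

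Primitivity is the main obstacle, and I would reduce it to a corner-algebra identity. Writing $\varepsilon_0 = I_X\otimes\varepsilon_{\{y_0\}}$, the simple-tensor observation gives, for a spanning element $M\otimes N$, that $\varepsilon_0(M\otimes N)\varepsilon_0 = N_{y_0 y_0}\,(M\otimes\varepsilon_{\{y_0\}})$, where $M$ is a word in $\{\sigma_s,\varepsilon_{x_0 s},J_X,I_X\}\subseteq\mathcal{T}(X,S,x_0)$; combined with the evident reverse inclusion this yields $\varepsilon_0\,\mathcal{T}(S\wr T)\,\varepsilon_0 = \mathcal{T}(U^{(1_Y)})\otimes\varepsilon_{\{y_0\}}$. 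Since $\tilde{e}_{\chi}=\varepsilon_0\tilde{e}_{\chi}=\tilde{e}_{\chi}\varepsilon_0$, it then follows that $\tilde{e}_{\chi}\,\mathcal{T}(S\wr T)\,\tilde{e}_{\chi} = e_{\chi}\mathcal{T}(U^{(1_Y)})e_{\chi}\otimes\varepsilon_{\{y_0\}}$, which is a simple algebra because $e_{\chi}$ is a primitive central idempotent of $\mathcal{T}(U^{(1_Y)})$. As $\tilde{e}_{\chi}$ is central, $\tilde{e}_{\chi}\,\mathcal{T}(S\wr T)=\tilde{e}_{\chi}\,\mathcal{T}(S\wr T)\,\tilde{e}_{\chi}$ is a simple two-sided ideal, and in a semisimple algebra a central idempotent whose associated ideal is simple is primitive. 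The delicate point is exactly this corner identity; it works because all generators are simple tensors, so sandwiching a product by $\varepsilon_0$ collapses the entire $T$-side to the scalar $(y_0,y_0)$-entry and leaves an honest element of $\mathcal{T}(X,S,x_0)$ on the $S$-side.
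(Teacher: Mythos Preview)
Your proof is correct. The paper does not give its own proof of this lemma; it simply quotes the result from \cite{hkm} (Lemmas~4.2 and~4.4 there), so there is no in-paper argument to compare against directly. That said, your method---verifying commutation with each of the four generator families and then identifying the corner $\varepsilon_0\,\mathcal{T}(S\wr T)\,\varepsilon_0$ with $\mathcal{T}(U^{(1_Y)})\otimes\varepsilon_{\{y_0\}}$ to deduce that $\tilde{e}_{\chi}\mathcal{T}(S\wr T)\tilde{e}_{\chi}$ is simple---is exactly parallel to the paper's own proof of the companion Lemma~\ref{lem:34} for $\hat{e}_{\psi}$, which likewise checks commutation with generators and then observes that the projected algebra is isomorphic to a simple piece $e_{\psi}\mathcal{A}(S)$. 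The one ingredient you need that Lemma~\ref{lem:34} does not is the vanishing $e_{\chi}J_X=J_Xe_{\chi}=0$, and your argument for this (via $e_{\chi}e_{1_{\mathcal{T}(S)}}=0$ and $e_{1_{\mathcal{T}(S)}}\mathbf{1}=\mathbf{1}$) is correct; note that the same vanishing, pulled back through the tensor, is precisely what makes the corresponding $J_X\otimes\sigma_t$ terms disappear in the corner computation as well, so your simple-tensor description of $\varepsilon_0\,\mathcal{T}(S\wr T)\,\varepsilon_0$ is consistent.
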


\begin{lem}\label{lem:34}
For $t \in T_{3}$ and $\psi \in \mathrm{Irr}(\mathcal{A}(S)) \setminus \{1_{\mathcal{A}(S)}\}$,
$\hat{e}_{\psi}$ is a central primitive idempotent of $\mathcal{T}(S \wr T)$.
\end{lem}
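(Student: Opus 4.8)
The plan is to prove that $\hat{e}_{\psi} = e_{\psi}\otimes \varepsilon_{y_0 t}$ is a central primitive idempotent by establishing two things: first that $\hat{e}_\psi$ is \emph{central} in $\mathcal{T}(S\wr T)$, i.e. it commutes with all the algebra generators, and second that it is \emph{primitive}, i.e. it cannot be decomposed as a sum of two nonzero orthogonal idempotents lying in $\mathcal{T}(S\wr T)$. Since $\mathcal{T}(S\wr T)$ is semisimple, centrality and primitivity together are equivalent to saying $\hat{e}_\psi$ is the identity element of a simple two-sided ideal (a matrix-algebra block). I would first verify carefully that $\hat{e}_\psi$ genuinely lies in $\mathcal{T}(S\wr T)$, not merely in $\mathit{Mat}_{X\times Y}(\mathbb{C})$: here the assumption that the point set of $T_{y_0}$ is well-ordering is essential, because it guarantees (via the identity-matrix form of the relevant blocks in \eqref{A}) that the tensor factor $\varepsilon_{y_0 t}$ can be matched against the $S$-part $e_\psi \in \mathcal{A}(S)$ inside the Terwilliger algebra.

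To prove centrality I would check commutation of $\hat{e}_\psi$ against each generator listed for $\mathcal{T}(S\wr T)$, namely the elements $J_X\otimes \sigma_{t'}$, $I_X\otimes \varepsilon_{y_0 t'}$ (equivalently the basis $B(\mathcal{U})$), and $\sigma_s\otimes I_Y$, $\varepsilon_{x_0 s}\otimes \varepsilon_{\{y_0\}}$. Working factorwise in the tensor product $\mathit{Mat}_X\otimes \mathit{Mat}_Y$, the commutator with $\sigma_s\otimes I_Y$ reduces to $[e_\psi,\sigma_s]\otimes \varepsilon_{y_0 t}$, which vanishes because $e_\psi$ is a central idempotent of $\mathcal{A}(S)$. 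The commutator with $J_X\otimes\sigma_{t'}$ reduces to $(e_\psi J_X - J_X e_\psi)\otimes(\cdots)$, and since $\psi\neq 1_{\mathcal{A}(S)}$ we have $e_\psi J_X = J_X e_\psi = 0$ (the all-ones matrix $J_X$ generates the trivial character's block), so both sides are zero. The subtle generators are those involving $\varepsilon_{\{y_0\}}$ and the relations touching the distinguished fiber $y_0$; here I would use that $\varepsilon_{y_0 t}$ is supported off $\{y_0\}$ for $t\in T_3$ (since $y_0\in y_0 1_Y$, disjoint from $y_0 t$), so that $\varepsilon_{y_0 t}\cdot \varepsilon_{\{y_0\}} = 0$, killing the cross terms.

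For primitivity, the cleanest route is to show that $\hat{e}_\psi\,\mathcal{T}(S\wr T)\,\hat{e}_\psi$ is isomorphic to the simple algebra $e_\psi \mathcal{A}(S) e_\psi$, which is a full matrix algebra because $\psi$ is an irreducible character of the semisimple algebra $\mathcal{A}(S)$ and $e_\psi$ is already a \emph{central} primitive idempotent there. Concretely, I would exhibit an algebra isomorphism from the corner $\hat{e}_\psi\,\mathcal{T}(S\wr T)\,\hat{e}_\psi$ onto $e_\psi\,\mathcal{A}(S)\,e_\psi$ by tracking how each generator, sandwiched between two copies of $\hat{e}_\psi$, acts only on the $X$-tensor factor through multiplication by $\sigma_s$ (the $Y$-factor collapsing to the scalar block $\varepsilon_{y_0 t}$); the identity of this corner is exactly $\hat{e}_\psi$. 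Since a central idempotent whose corner is simple must be primitive, this completes the argument.

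The main obstacle I anticipate is the careful bookkeeping in the well-ordering step: I must confirm that for \emph{every} $t\in T_3$ the element $e_\psi\otimes\varepsilon_{y_0 t}$ really does lie in $\mathcal{T}(S\wr T)$ and that the corner isomorphism is independent of the choice of $t$ — in other words, that the well-ordering normal form from Proposition~\ref{lem:well-order} lets me realize $\varepsilon_{y_0 t}$ as a product of the generators $I_X\otimes\varepsilon_{y_0 t'}$ and the identity-matrix blocks $\overline{t'_{m_i}}$ recorded there. Establishing this membership and the accompanying factorwise computation of the corner algebra is where the real work lies; once $\hat{e}_\psi\in\mathcal{T}(S\wr T)$ and the corner is identified, centrality and primitivity follow from the tensor structure and the semisimplicity of $\mathcal{A}(S)$ as sketched above.
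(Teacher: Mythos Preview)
Your approach is essentially the same as the paper's: check centrality against the four families of generators (using $[e_\psi,\sigma_s]=0$, $e_\psi J_X=0$, and $\varepsilon_{y_0 t}\varepsilon_{\{y_0\}}=0$), then identify the corner $\hat{e}_\psi\,\mathcal{T}(S\wr T)\,\hat{e}_\psi$ with $e_\psi\mathcal{A}(S)$ to conclude primitivity.

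However, you have manufactured a difficulty that is not there. Membership of $\hat{e}_\psi = e_\psi\otimes\varepsilon_{y_0 t}$ in $\mathcal{T}(S\wr T)$ is immediate and has nothing to do with the well-ordering hypothesis: since $e_\psi\in\mathcal{A}(S)$ is a linear combination of the $\sigma_s$, and since both $\sigma_s\otimes I_Y$ and $I_X\otimes\varepsilon_{y_0 t}$ are generators of $\mathcal{T}(S\wr T)$, one simply has
\[
\sigma_s\otimes\varepsilon_{y_0 t} \;=\; (\sigma_s\otimes I_Y)(I_X\otimes\varepsilon_{y_0 t}) \;\in\; \mathcal{T}(S\wr T),
\]
and hence $e_\psi\otimes\varepsilon_{y_0 t}\in\mathcal{T}(S\wr T)$. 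The well-ordering assumption is used in the paper only for the much finer elements $G_{y_0 t,y_0 t'}$ and $G'_{y_0 t,y_0 t'}$, which involve the individual points $y_{t(i)}$ inside the fiber $y_0 t$; for $\hat{e}_\psi$ only the coarse idempotent $\varepsilon_{y_0 t}$ appears, and no ordering of the three points is needed. So the ``real work'' you anticipate does not exist for this lemma, and once you drop that concern your outline is exactly the paper's proof.
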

\begin{proof}
First, we show that $\hat{e}_{\psi}$ commutes with $\sigma_{s}\otimes I_{Y}$ ($s \in S$), $J_{X}\otimes \sigma_{u}$ ($u \in
T\setminus\{1_Y\}$), $\varepsilon_{x_{0}s}\otimes \varepsilon_{\{y_{0}\}}$ ($s \in S$), and $I_{X}\otimes \varepsilon_{y_{0}u}$ ($u\in T\setminus\{1_Y\}$).

For $s \in S$, we have
\[\hat{e}_{\psi} (\sigma_{s}\otimes I_{Y}) = \sum_{u\in T} \hat{e}_{\psi}(\sigma_{s}\otimes \varepsilon_{y_0 u}) = (e_{\psi}\otimes \varepsilon_{y_0 t} )(\sigma_{s}\otimes \varepsilon_{y_0 t}).\]
Since $e_{\psi}$ commutes with $\sigma_s$, we have
\[\hat{e}_{\psi} (\sigma_{s}\otimes I_{Y}) = (\sigma_{s}\otimes I_{Y}) \hat{e}_{\psi}.\]

Since $t \neq 1_Y$, we have
\[\hat{e}_{\psi}(\varepsilon_{x_{0}s}\otimes \varepsilon_{\{y_{0}\}}) = (\varepsilon_{x_{0}s}\otimes \varepsilon_{\{y_{0}\}})\hat{e}_{\psi} = 0.\]

Since $e_{1_{\mathcal{A}(S)}}=|X|^{-1}J_X$ and $e_{1_{\mathcal{A}(S)}} e_\psi = e_\psi e_{1_{\mathcal{A}(S)}}=0$, we have
\[\hat{e}_{\psi}(J_X\otimes \sigma_u) = (J_X\otimes \sigma_u)\hat{e}_{\psi} = 0.\]
Also, $\hat{e}_{\psi}(I_{X}\otimes \varepsilon_{y_{0}u}) = (I_{X}\otimes \varepsilon_{y_{0}u})\hat{e}_{\psi}$ is trivial.

\vskip5pt
Next, we show that $\hat{e}_{\psi}$ is primitive.
The map  $\pi:\mathcal{T}(S\wr T) \rightarrow \hat{e}_{\psi}\mathcal{T}(S\wr T)$ is a projection.
Actually, $\hat{e}_{\psi}\mathcal{T}(S\wr T)$ is naturally isomorphic to $e_{\psi}\mathcal{A}(S)$.
Since $e_{\psi}$ is a central primitive idempotent of $\mathcal{A}(S)$, $\hat{e}_{\psi}$ is primitive.
\end{proof}

\vskip15pt
Now we define the other central primitive idempotents of $\mathcal{T}(S \wr T)$.

\vskip15pt
Put
\begin{enumerate}
\item $\omega$ is a cube root of unity,
\item $y_0 t = \{ y_{t(1)}, y_{t(2)}, y_{t(3)} \}$ and
\item $y_0 t' = \{ y_{t'(1)}, y_{t'(2)}, y_{t'(3)} \}$
\end{enumerate}
such that the adjacency matrix of $\{ (y_{t(1)}, y_{t'(1)}), (y_{t(2)}, y_{t'(2)}), (y_{t(3)}, y_{t'(3)}) \}$  is the identity matrix.

\vskip15pt
For all $t, t' \in T_3$, define
\begin{eqnarray*}
G_{y_0 t, y_0 t'} &=& \frac{1}{3|X|}J_{X}\otimes (\varepsilon_{y_{0}t, y_{0}t'} + \omega \overline{\varepsilon_{y_{0}t, y_{0}t'}} + \omega^2 \underline{\varepsilon_{y_{0}t, y_{0}t'}})
\end{eqnarray*}
and
\begin{eqnarray*}
G'_{y_0 t, y_0 t'} &=& \frac{1}{3|X|}J_{X}\otimes (\varepsilon_{y_{0}t, y_{0}t'} + \omega^2 \overline{\varepsilon_{y_{0}t, y_{0}t'}} + \omega \underline{\varepsilon_{y_{0}t, y_{0}t'}}),
\end{eqnarray*}
where \[\varepsilon_{y_{0}t, y_{0}t'} := J_{\{y_{t(1)}\}, \{y_{t'(1)}\}} + J_{\{y_{t(2)}\}, \{y_{t'(2)}\}} + J_{\{y_{t(3)}\}, \{y_{t'(3)}\}},\]
\[\overline{\varepsilon_{y_{0}t, y_{0}t'}} := J_{\{y_{t(1)}\}, \{y_{t'(2)}\}} + J_{\{y_{t(2)}\}, \{y_{t'(3)}\}} +  J_{\{y_{t(3)}\}, \{y_{t'(1)}\}},\]
and \[\underline{\varepsilon_{y_{0}t, y_{0}t'}} := J_{\{y_{t(1)}\}, \{y_{t'(3)}\}} + J_{\{y_{t(2)}\}, \{y_{t'(1)}\}} + J_{\{y_{t(3)}\}, \{y_{t'(2)}\}}.\]

It is easy to see that $\{G_{y_0 t, y_0 t'} \mid t, t' \in T_3 \}$ and $\{G'_{y_0 t, y_0 t'} \mid t, t' \in T_3 \}$
are linearly independent subsets of $\mathcal{T}(S \wr T)$.

\begin{lem}\label{lem:ideal}
$\langle\{G_{y_0 t, y_0 t'} \mid t, t' \in T_3 \}\rangle$ is an ideal
and isomorphic to $Mat_{T_3}(\mathbb{C})$.
\end{lem}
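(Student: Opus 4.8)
The plan is to show that the elements $G_{y_0 t, y_0 t'}$ form a system of matrix units indexed by $T_3 \times T_3$; this will simultaneously give the algebra isomorphism onto $Mat_{T_3}(\mathbb{C})$ and, after a closure check against the generators of $\mathcal{T}(S \wr T)$, the ideal property. Throughout I abbreviate
\[
\Phi_{t,t'} = \varepsilon_{y_{0}t, y_{0}t'} + \omega\,\overline{\varepsilon_{y_{0}t, y_{0}t'}} + \omega^2\,\underline{\varepsilon_{y_{0}t, y_{0}t'}} \in Mat_Y(\mathbb{C}),
\]
so that $G_{y_0 t, y_0 t'} = \tfrac{1}{3|X|}\, J_X \otimes \Phi_{t,t'}$, and I let $P$ be the $3\times 3$ cyclic shift with $1$'s in positions $(1,2),(2,3),(3,1)$; here $\langle\,\cdot\,\rangle$ denotes the $\mathbb{C}$-linear span.

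First I would record the local model. Because the point set of $T_{y_{0}}$ is well-ordering and, by Lemma~\ref{lem:semiregular}, semiregular, every basic relation of $T_{y_{0}}$ restricts on a block $y_0 r \times y_0 r'$ (with $r, r' \in T_3$) to one of the three matrices in \eqref{A}, that is, to $I$, $P$, or $P^2$ in the fixed ordering. Hence, in that ordering, each $\Phi_{t,t'}$ restricts on $y_0 t \times y_0 t'$ to the single circulant $C := I + \omega P + \omega^2 P^2$, independently of $t,t'$; in particular $\Phi_{t,t'} \in \mathcal{A}(T_{y_{0}}) = \mathcal{T}(T)$, so indeed $G_{y_0 t, y_0 t'} \in \mathcal{U} \subseteq \mathcal{T}(S \wr T)$. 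A direct computation using $P^3 = I$ and $\omega^3 = 1$ yields the two relations I will use repeatedly: $C^2 = 3C$ and $P^k C = C P^k = \omega^{-k} C$ for all $k$.

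Next I would verify the matrix-unit relations. Since $J_X^2 = |X| J_X$, we have $G_{y_0 t, y_0 t'}\, G_{y_0 s, y_0 s'} = \tfrac{1}{9|X|}\, J_X \otimes (\Phi_{t,t'}\Phi_{s,s'})$. The fibers $y_0 t'$ and $y_0 s$ are equal or disjoint, so $\Phi_{t,t'}\Phi_{s,s'} = 0$ unless $t' = s$; when $t'=s$, the global consistency of the orderings furnished by well-ordering makes the relevant block of the product literally $C\cdot C = 3C$, whence $\Phi_{t,t'}\Phi_{t',s'} = 3\Phi_{t,s'}$. Therefore
\[
G_{y_0 t, y_0 t'}\, G_{y_0 s, y_0 s'} = \delta_{t',s}\, G_{y_0 t, y_0 s'}.
\]
Combined with the linear independence already noted before the lemma, the assignment $G_{y_0 t, y_0 t'} \mapsto E_{t,t'}$ extends to an algebra isomorphism of $\langle\{G_{y_0 t, y_0 t'}\}\rangle$ onto $Mat_{T_3}(\mathbb{C})$.

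Finally, I would obtain the ideal property by checking closure under left and right multiplication by each generator of $\mathcal{T}(S \wr T)$. The generators $\sigma_s \otimes I_Y$ act as the scalar $n_s$ (because $\sigma_s J_X = J_X \sigma_s = n_s J_X$), the generators $\varepsilon_{x_{0}s}\otimes \varepsilon_{\{y_{0}\}}$ annihilate every $G$ (since $\varepsilon_{\{y_{0}\}}\Phi_{t,t'} = 0$ as $t,t' \neq 1_Y$), and $I_X \otimes \varepsilon_{y_{0}u}$ acts by $\delta_{u,t}$ on the left and $\delta_{u,t'}$ on the right. The one substantial case—and where I expect the main obstacle—is multiplication by $J_X \otimes \sigma_u$ for $u \in T^\times$: here $(J_X \otimes \sigma_u) G_{y_0 t, y_0 t'} = \tfrac{1}{3}\, J_X \otimes (\sigma_u \Phi_{t,t'})$, so I must show $\sigma_u \Phi_{t,t'}$ lies in the span of $\{\Phi_{r,t'} : r \in T_3\}$. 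Restricting to a block $y_0 r \times y_0 t'$, only the intermediate fiber $y_0 t$ contributes, and by the local model $\sigma_u|_{y_0 r \times y_0 t} = \sum_{k \in K} P^k$ for some $K \subseteq \{0,1,2\}$; the eigen-relation $P^k C = \omega^{-k} C$ then collapses the block to $\big(\sum_{k \in K} \omega^{-k}\big) C$, giving $\sigma_u \Phi_{t,t'} = \sum_r \lambda_r \Phi_{r,t'}$ with explicit scalars, and the dual computation with $C P^k = \omega^{-k} C$ handles right multiplication. The delicate points—arranging the orderings so that the block products are genuinely $C \cdot C$, and confirming that every restriction $\sigma_u|_{y_0 r \times y_0 t}$ is a sum of powers of $P$—are exactly what the well-ordering hypothesis and Lemma~\ref{lem:semiregular} secure, which completes the proof.
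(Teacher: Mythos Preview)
Your proof is correct and follows essentially the same approach as the paper: both arguments verify the matrix-unit relations $G_{y_0 t, y_0 t'} G_{y_0 s, y_0 s'} = \delta_{t',s} G_{y_0 t, y_0 s'}$ by a direct block computation, and both establish the ideal property by checking closure under the generators of $\mathcal{T}(S\wr T)$, the only substantive case being multiplication by $J_X\otimes\sigma_u$, handled via the well-ordering hypothesis which forces each block of $\sigma_u$ to be a sum of cyclic shifts. Your packaging via the circulant $C=I+\omega P+\omega^2 P^2$ and the eigen-relation $P^kC=\omega^{-k}C$ is a clean abbreviation of exactly the computation the paper carries out with the symbols $\varepsilon_{y_0t,y_0t'},\overline{\varepsilon_{y_0t,y_0t'}},\underline{\varepsilon_{y_0t,y_0t'}}$; the order of the two halves is reversed, but the content is the same.
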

\begin{proof}
First, we prove that $\sigma_u G_{y_0 t, y_0 t'}, G_{y_0 t, y_0 t'} \sigma_u \in \langle\{G_{y_0 t, y_0 t'} \mid t, t' \in T_3 \}\rangle$
for $u \in S \wr T$.

If $\sigma_u G_{y_0 t, y_0 t'} \neq 0$, then $(\sigma_u)_{X \times y_0 h, X \times y_0t} \neq 0$ for some $h \in T_3$.
This means that $\sigma_u = J_{X}\otimes \sigma_{t_1}$ for some $t_1 \in T_3$ and $t_1 \in h^*t$.
By Lemma \ref{lem:inter1}, $|h^\ast t| = 2$ or $3$.
This means that $|t_1 \cap (y_0h \times y_0t)| = 3$ or $6$.
Note that the adjacency matrix of $t_1 \cap (y_0h \times y_0t)$ is a sum of at most two matrices given in (\ref{A}).

So, $\sigma_u G_{y_0 t, y_0 t'}$ is either $(J_{X}\otimes \varepsilon_1)G_{y_0 t, y_0 t'}$ or
$(J_{X}\otimes (\varepsilon_1 + \varepsilon_2))G_{y_0 t, y_0 t'}$,
where $\varepsilon_1, \varepsilon_2 \in \{ \varepsilon_{y_{0}h, y_{0}t}, \overline{\varepsilon_{y_{0}h, y_{0}t}}, \underline{\varepsilon_{y_{0}h, y_{0}t}} \}$ are distinct.

By calculation, we have the followings:
\begin{eqnarray*}
(J_{X}\otimes \varepsilon_1)G_{y_0 t, y_0 t'} &=& (J_{X}\otimes \varepsilon_1) (\frac{1}{3|X|}J_{X}\otimes (\varepsilon_{y_{0}t, y_{0}t'} + \omega \overline{\varepsilon_{y_{0}t, y_{0}t'}} + \omega^2 \underline{\varepsilon_{y_{0}t, y_{0}t'}}))  \\
                                  &=& \frac{1}{3}J_{X}\otimes \omega^i(\varepsilon_{y_{0}h, y_{0}t'} + \omega \overline{\varepsilon_{y_{0}h, y_{0}t'}} + \omega^2 \underline{\varepsilon_{y_{0}h, y_{0}t'}})
\end{eqnarray*}
for some $0 \leq i \leq 2$;

\begin{eqnarray*}
(J_{X}\otimes (\varepsilon_1 + \varepsilon_2))G_{y_0 t, y_0 t'} &=& (J_{X}\otimes (\varepsilon_1 + \varepsilon_2)) (\frac{1}{3|X|}J_{X}\otimes (\varepsilon_{y_{0}t, y_{0}t'} + \omega \overline{\varepsilon_{y_{0}t, y_{0}t'}} + \omega^2 \underline{\varepsilon_{y_{0}t, y_{0}t'}}))  \\
                                &=& \frac{1}{3}J_{X}\otimes (\omega^i + \omega^j)(\varepsilon_{y_{0}h, y_{0}t'} + \omega \overline{\varepsilon_{y_{0}h, y_{0}t'}} + \omega^2 \underline{\varepsilon_{y_{0}h, y_{0}t'}})
\end{eqnarray*}
for some distinct $0 \leq i, j \leq 2$.

Thus, $\sigma_u G_{y_0 t, y_0 t'}$ is either $|X|\omega^iG_{y_0 h, y_0 t'}$ or $|X|(\omega^i + \omega^j)G_{y_0 h, y_0 t'}$
for some distinct $0 \leq i, j \leq 2$.
Whichever the case may be, it implies $\sigma_u G_{y_0 t, y_0 t'} \in \langle\{G_{y_0 t, y_0 t'} \mid t, t' \in T_3 \}\rangle$.
Similarly, we can show $G_{y_0 t, y_0 t'} \sigma_u  \in \langle\{G_{y_0 t, y_0 t'} \mid t, t' \in T_3 \}\rangle$.

\vskip15pt
For $u \in S \wr T$ and $t, t' \in T_3$,
clearly
\[\varepsilon_{(x_0,y_0)u} G_{y_0 t, y_0 t'} = \delta_{(x_0,y_0)u (X\times y_0 t)}G_{y_0 t, y_0 t'} \in \langle\{G_{y_0 t, y_0 t'} \mid t, t' \in T_3 \}\rangle\]
and
\[G_{y_0 t, y_0 t'}\varepsilon_{(x_0,y_0)u} \in \langle\{G_{y_0 t, y_0 t'} \mid t, t' \in T_3 \}\rangle.\]

Thus, $\langle\{G_{y_0 t, y_0 t'} \mid t, t' \in T_3 \}\rangle$ is an ideal.

\vskip15pt
Next, we prove that $G_{y_0 t, y_0 t'} G_{y_0 t''', y_0 t''} = \delta_{t't'''}G_{y_0 t, y_0 t''}$.
It suffices to show that $G_{y_0 t, y_0 t'}G_{y_0 t', y_0 t''} = G_{y_0 t, y_0 t''}$.
By calculation, we have
\begin{eqnarray*}
G_{y_0 t, y_0 t'}G_{y_0 t', y_0 t''} =
(\frac{1}{3|X|}J_{X}\otimes (\varepsilon_{y_{0}t, y_{0}t'} + \omega \overline{\varepsilon_{y_{0}t, y_{0}t'}} + \omega^2 \underline{\varepsilon_{y_{0}t, y_{0}t'}})) \\
(\frac{1}{3|X|}J_{X}\otimes (\varepsilon_{y_{0}t', y_{0}t''} + \omega \overline{\varepsilon_{y_{0}t', y_{0}t''}} + \omega^2 \underline{\varepsilon_{y_{0}t', y_{0}t''}})) \\
= \frac{1}{9|X|}J_{X}\otimes 3(\varepsilon_{y_{0}t, y_{0}t''} + \omega \overline{\varepsilon_{y_{0}t, y_{0}t''}} + \omega^2 \underline{\varepsilon_{y_{0}t, y_{0}t''}}) = G_{y_0 t, y_0 t''}.
\end{eqnarray*}

\vskip15pt
Finally, we prove that $\langle\{G_{y_0 t, y_0 t'} \mid t, t' \in T_3 \}\rangle \cong Mat_{T_3}(\mathbb{C})$.
For $t, t' \in T_3$, let $e_{tt'}$ be a $|T_3| \times |T_3|$ matrix whose $(t, t')$-entry is $1$ and whose other entries are all zero.
Then the linear map $\varphi: \langle\{G_{y_0 t, y_0 t'} \mid t, t' \in T_3 \}\rangle \rightarrow Mat_{T_3}(\mathbb{C})$ defined by
$\varphi(G_{y_0 t, y_0 t'}) = e_{tt'}$ is an isomorphism.
\end{proof}

Similarly to the proof of Lemma \ref{lem:ideal}, we can prove the following lemma.
\begin{lem}\label{lem:ideal2}
$\langle\{G'_{y_0 t, y_0 t'} \mid t, t' \in T_3 \}\rangle$ is an ideal
and isomorphic to $Mat_{T_3}(\mathbb{C})$.
\end{lem}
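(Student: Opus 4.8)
The plan is to reproduce the proof of Lemma \ref{lem:ideal} line by line, exploiting the fact that $G'_{y_0 t, y_0 t'}$ is obtained from $G_{y_0 t, y_0 t'}$ simply by interchanging $\omega$ and $\omega^2$ in the three coefficients. Since every identity used in that proof depends only on the relation $\omega^3 = 1$ (together with $J_X J_X = |X| J_X$ and the fact that the three matrices in (\ref{A}) are the cyclic $3 \times 3$ permutation matrices, which compose so that consecutive blocks multiply and disjoint blocks annihilate), and since $(\omega^2)^3 = 1$ as well, the whole argument survives the substitution $\omega \mapsto \omega^2$. Thus I would first record this observation, so that each subsequent computation can be quoted from Lemma \ref{lem:ideal} with $\omega$ replaced by $\omega^2$ throughout.

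First I would prove that $\langle\{G'_{y_0 t, y_0 t'} \mid t, t' \in T_3\}\rangle$ is an ideal. For $u \in S \wr T$, if $\sigma_u G'_{y_0 t, y_0 t'} \neq 0$ then, exactly as before, $\sigma_u = J_X \otimes \sigma_{t_1}$ with $t_1 \in h^\ast t$ for some $h \in T_3$, and by Lemma \ref{lem:inter1} the adjacency matrix of $t_1 \cap (y_0 h \times y_0 t)$ is a sum of at most two of the matrices in (\ref{A}). The only change is in the bookkeeping of roots of unity: left-multiplying $G'_{y_0 t, y_0 t'}$ by $J_X \otimes \varepsilon_1$ produces $\frac{1}{3}J_X \otimes \omega^{2i}(\varepsilon_{y_0 h, y_0 t'} + \omega^2 \overline{\varepsilon_{y_0 h, y_0 t'}} + \omega \underline{\varepsilon_{y_0 h, y_0 t'}})$ for some $0 \leq i \leq 2$, and by $J_X \otimes (\varepsilon_1 + \varepsilon_2)$ the same expression with $\omega^{2i}$ replaced by $\omega^{2i} + \omega^{2j}$. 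In either case this is a scalar multiple of $G'_{y_0 h, y_0 t'}$, so $\sigma_u G'_{y_0 t, y_0 t'}$ lies in the span; the computations for $G'_{y_0 t, y_0 t'}\sigma_u$ and for left and right multiplication by $\varepsilon_{(x_0,y_0)u}$ are verbatim those of Lemma \ref{lem:ideal}.

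Next I would check the matrix-unit relation $G'_{y_0 t, y_0 t'} G'_{y_0 t''', y_0 t''} = \delta_{t't'''}G'_{y_0 t, y_0 t''}$, for which it suffices to compute $G'_{y_0 t, y_0 t'} G'_{y_0 t', y_0 t''}$. Using $J_X J_X = |X|J_X$ and the composition of the cyclic matrices indexed by $(y_0 t, y_0 t')$ and $(y_0 t', y_0 t'')$, the coefficients $(1,\omega^2,\omega)$ convolve so that each of the three cyclic blocks of $(y_0 t, y_0 t'')$ is produced with multiplicity three, giving the factor that turns $\frac{1}{9|X|}$ into $\frac{1}{3|X|}$; hence the product equals $G'_{y_0 t, y_0 t''}$, while for $t' \neq t'''$ the disjointness of the fibers $y_0 t'$ and $y_0 t'''$ forces the product to be $0$. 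Finally, since $\{G'_{y_0 t, y_0 t'} \mid t, t' \in T_3\}$ was already noted to be linearly independent, the map $\varphi: G'_{y_0 t, y_0 t'} \mapsto e_{tt'}$ is a linear bijection onto $Mat_{T_3}(\mathbb{C})$ that respects multiplication, yielding the isomorphism. I do not expect a genuine obstacle here, as the argument is word-for-word that of Lemma \ref{lem:ideal}; the only point needing a moment's care is to confirm that replacing $\omega^i$ by $\omega^{2i}$ (and $\omega^i + \omega^j$ by $\omega^{2i}+\omega^{2j}$) never destroys membership in the span, which is clear because a scalar multiple of $G'_{y_0 h, y_0 t'}$ lies in the span whether or not that scalar happens to vanish.
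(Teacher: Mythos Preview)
Your proposal is correct and matches the paper's approach exactly: the paper itself simply states that Lemma~\ref{lem:ideal2} is proved ``similarly to the proof of Lemma~\ref{lem:ideal}'' without writing out the details. Your observation that the substitution $\omega \leftrightarrow \omega^2$ carries the entire argument through verbatim is precisely the intended justification.
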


\vskip15pt
Define \[e_{\eta_1} = \sum_{t \in T_3} \frac{1}{3|X|}J_{X}\otimes (\varepsilon_{y_{0}t} + \omega \overline{\varepsilon_{y_{0}t}} + \omega^2 \underline{\varepsilon_{y_{0}t}})\]
and
\[e_{\eta_2} = \sum_{t \in T_3} \frac{1}{3|X|}J_{X}\otimes (\varepsilon_{y_{0}t} + \omega^2 \overline{\varepsilon_{y_{0}t}} + \omega \underline{\varepsilon_{y_{0}t}}),\]
where
\[\overline{\varepsilon_{y_{0}t}} := J_{\{y_{t(1)}\}, \{y_{t(2)}\}} + J_{\{y_{t(2)}\}, \{y_{t(3)}\}} +  J_{\{y_{t(3)}\}, \{y_{t(1)}\}}\]
and \[\underline{\varepsilon_{y_{0}t}} := J_{\{y_{t(1)}\}, \{y_{t(3)}\}} + J_{\{y_{t(2)}\}, \{y_{t(1)}\}} + J_{\{y_{t(3)}\}, \{y_{t(2)}\}}.\]

Then, by Lemma \ref{lem:ideal} and \ref{lem:ideal2}, $e_{\eta_1}$ and $e_{\eta_2}$ are central primitive idempotents of $\mathcal{T}(S\wr T)$.

\begin{lem}\label{lem:sum}
The sum of $e_{1_{\mathcal{T}(S\wr T)}}$, $\tilde{e}_{\chi}$'s, $\hat{e}_{\psi}$'s, $e_{\eta_1}$ and $e_{\eta_2}$ is the identity element.
\end{lem}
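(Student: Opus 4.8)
The plan is to prove the identity by directly summing the underlying matrices in $Mat_X(\mathbb{C}) \otimes Mat_Y(\mathbb{C})$, organizing the terms according to the splitting $I_Y = \varepsilon_{\{y_0\}} + \varepsilon_{Y \setminus \{y_0\}}$ together with $\varepsilon_{Y \setminus \{y_0\}} = \sum_{t \in T_3} \varepsilon_{y_0 t}$. First I would make $e_{1_{\mathcal{T}(S \wr T)}}$ explicit. Applying the formula for the trivial central primitive idempotent of a Terwilliger algebra to $(X \times Y, S \wr T, (x_0,y_0))$, and using that the basic relations of $S \wr T$ are the $\tilde{s}$ (with $(x_0,y_0)\tilde{s} = (x_0 s, y_0)$ and valency $n_s$) and the $\bar{t}$ for $t \in T_3$ (with $(x_0,y_0)\bar{t} = (X, y_0 t)$ and valency $3|X|$), the terms factor through the tensor product and one obtains
\[ e_{1_{\mathcal{T}(S \wr T)}} = e_{1_{\mathcal{T}(S)}} \otimes \varepsilon_{\{y_0\}} + \sum_{t \in T_3} \frac{1}{3|X|}\, J_X \otimes J_{y_0 t}. \]

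Next I would collapse the two families $\tilde{e}_\chi$ and $\hat{e}_\psi$ by completeness of central primitive idempotents in a semisimple algebra. Since the $e_\chi$ exhaust $\mathrm{Irr}(\mathcal{T}(U^{(1_Y)}))$ and $\mathcal{T}(U^{(1_Y)}) \cong \mathcal{T}(S)$ has identity $I_X$, we have $\sum_{\chi} e_\chi = I_X$, whence $\sum_{\chi \neq 1} \tilde{e}_\chi = (I_X - e_{1_{\mathcal{T}(S)}}) \otimes \varepsilon_{\{y_0\}}$; adding the first summand of $e_{1_{\mathcal{T}(S \wr T)}}$ telescopes this to $I_X \otimes \varepsilon_{\{y_0\}}$. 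Similarly, since $\sum_\psi e_\psi = I_X$ in $\mathcal{A}(S)$ with $e_{1_{\mathcal{A}(S)}} = |X|^{-1} J_X$, summing $\hat{e}_\psi = e_\psi \otimes \varepsilon_{y_0 t}$ over all $t \in T_3$ and all $\psi \neq 1$ gives $(I_X - |X|^{-1} J_X) \otimes \varepsilon_{Y \setminus \{y_0\}}$.

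It remains to evaluate $e_{\eta_1} + e_{\eta_2}$. Using $\omega + \omega^2 = -1$, the cyclic cross terms combine, and since the identity pattern and the two cyclic patterns sum to the all-ones block, $\varepsilon_{y_0 t} + \overline{\varepsilon_{y_0 t}} + \underline{\varepsilon_{y_0 t}} = J_{y_0 t}$, one finds
\[ e_{\eta_1} + e_{\eta_2} = \sum_{t \in T_3} \frac{1}{3|X|}\, J_X \otimes (3\varepsilon_{y_0 t} - J_{y_0 t}) = |X|^{-1} J_X \otimes \varepsilon_{Y \setminus \{y_0\}} - \sum_{t \in T_3} \frac{1}{3|X|}\, J_X \otimes J_{y_0 t}. \]
Adding the three contributions, the terms $\sum_{t \in T_3} \tfrac{1}{3|X|} J_X \otimes J_{y_0 t}$ cancel and the two $|X|^{-1} J_X \otimes \varepsilon_{Y \setminus \{y_0\}}$ terms cancel, leaving $I_X \otimes \varepsilon_{\{y_0\}} + I_X \otimes \varepsilon_{Y \setminus \{y_0\}} = I_X \otimes I_Y$, the identity element of $\mathcal{T}(S \wr T)$.

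I expect the main obstacle to be the first step: correctly identifying the fibers and valencies of the one point extension of $S \wr T$ so that $e_{1_{\mathcal{T}(S \wr T)}}$ splits into the clean tensor form above. Once that is in place the remaining steps are bookkeeping, the only nonformal inputs being the cube-root identity $\omega + \omega^2 = -1$ and the well-ordering convention from Proposition \ref{lem:well-order}, which guarantees that the labelling $y_0 t = \{y_{t(1)}, y_{t(2)}, y_{t(3)}\}$ underlying $\overline{\varepsilon_{y_0 t}}$ and $\underline{\varepsilon_{y_0 t}}$ is consistent across all $t \in T_3$, so that $\varepsilon_{y_0 t} + \overline{\varepsilon_{y_0 t}} + \underline{\varepsilon_{y_0 t}} = J_{y_0 t}$ holds uniformly.
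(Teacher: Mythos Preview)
Your proposal is correct and follows essentially the same approach as the paper: write each of the five summands explicitly as a tensor-product expression over $Mat_X(\mathbb{C})\otimes Mat_Y(\mathbb{C})$, then add them and observe that the blocks telescope to $I_X\otimes I_Y$. Your version is in fact more explicit about the cancellation (via $\omega+\omega^2=-1$ and $\varepsilon_{y_0t}+\overline{\varepsilon_{y_0t}}+\underline{\varepsilon_{y_0t}}=J_{y_0t}$), and the paper's $e_{1_{\mathcal{T}(U^{(1_Y)})}}$ is exactly your $e_{1_{\mathcal{T}(S)}}$ under the natural identification $U^{(1_Y)}\cong S$; the only superfluous remark is that the identity $\varepsilon_{y_0t}+\overline{\varepsilon_{y_0t}}+\underline{\varepsilon_{y_0t}}=J_{y_0t}$ holds for any labelling of $y_0t$ and does not require the well-ordering of Proposition~\ref{lem:well-order}.
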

\begin{proof}
It is easy to see that
\begin{eqnarray*}
e_{1_{\mathcal{T}(S\wr T)}} &=& e_{1_{\mathcal{T}(U^{(1_Y)})}}\otimes \varepsilon_{\{y_{0}\}}
  + \sum_{t\in T_3}\frac{1}{3|X|}\varepsilon_{F^{(t)}}J_{X\times Y}\varepsilon_{F^{(t)}},
\end{eqnarray*}

\begin{eqnarray*}
\sum_{\chi\in \mathrm{Irr}(\mathcal{T}(U^{(1_Y)}))^\times}\tilde{e}_\chi
  = \varepsilon_{F^{(1_Y)}}I_{X\times Y}\varepsilon_{F^{(1_Y)}}
  -e_{1_{\mathcal{T}(U^{(1_Y)})}}\otimes \varepsilon_{\{y_0\}},
\end{eqnarray*}

\begin{eqnarray*}
\sum_{\psi \in \mathrm{Irr}(\mathcal{A}(S))^\times}\hat{e}_\psi
  = \varepsilon_{F^{(t)}}I_{X\times Y}\varepsilon_{F^{(t)}}
  -\frac{1}{|X|}(J_{X\times y_{t(1)}} + J_{X\times y_{t(2)}} + J_{X\times y_{t(3)}})
\end{eqnarray*}
for each $t \in T_3$,

\begin{eqnarray*}
e_{\eta_1} = \sum_{t \in T_3} \frac{1}{3|X|}J_{X}\otimes (\varepsilon_{y_{0}t} + \omega \overline{\varepsilon_{y_{0}t}} + \omega^2 \underline{\varepsilon_{y_{0}t}})
\end{eqnarray*}
and
\begin{eqnarray*}
e_{\eta_2} = \sum_{t \in T_3} \frac{1}{3|X|}J_{X}\otimes (\varepsilon_{y_{0}t} + \omega^2 \overline{\varepsilon_{y_{0}t}} + \omega \underline{\varepsilon_{y_{0}t}}).
\end{eqnarray*}

Thus, we have
\begin{eqnarray*}
e_{1_{\mathcal{T}(S\wr T)}}
+ \sum_{\chi\in \mathrm{Irr}(\mathcal{T}(U^{(1_Y)}))^\times}\tilde{e}_\chi
+ \sum_{t \in T_3} \sum_{\psi \in \mathrm{Irr}(\mathcal{A}(S))^\times}\hat{e}_\psi
+ e_{\eta_1} + e_{\eta_2} = I_{X\times Y}.
\end{eqnarray*}
\end{proof}

\section{Main result}\label{sec:mainresult}
In conclusion,  we have determined all central primitive idempotents of Terwilliger algebras of wreath products by 3-equivalenced schemes.
Combining Section \ref{sec:main} and \cite[Theorem 4.1]{hkm} gives the following theorem.

\begin{thm}\label{thm:maintheorem}
Let $(X,S)$ be a scheme and $(Y,T)$ a 3-equivalenced scheme.
Fix $x_0 \in X$ and $y_0 \in Y$, and consider the wreath product $(X \times Y, S \wr T)$. Then
\begin{enumerate}
\item If $|T| = 2$, then
\begin{eqnarray*}
\{ e_{1_{\mathcal{T}(S\wr T)}} \} & \cup & \{ \tilde{e}_\chi \mid \chi \in \mathrm{Irr}(\mathcal{T}(U^{(1_Y)}))^\times  \}  \\
                                  & \cup & \bigcup_{t \in T_3}  \{ \hat{e}_\psi \mid \psi \in \mathrm{Irr}(\mathcal{A}(S))^\times \}
\end{eqnarray*}
is the set of all central primitive idempotents of $\mathcal{T}(X\times Y, S \wr T, (x_0, y_0))$.
\item If $|T| > 2$, then
\begin{eqnarray*}
\{ e_{1_{\mathcal{T}(S\wr T)}} \} & \cup & \{ \tilde{e}_\chi \mid \chi \in \mathrm{Irr}(\mathcal{T}(U^{(1_Y)}))^\times  \}  \\
\{ e_{\eta_1}, e_{\eta_2} \}                    & \cup & \bigcup_{t \in T_3}  \{ \hat{e}_\psi \mid \psi \in \mathrm{Irr}(\mathcal{A}(S))^\times \}
\end{eqnarray*}
is the set of all central primitive idempotents of $\mathcal{T}(X\times Y, S \wr T, (x_0, y_0))$.
\end{enumerate}
\end{thm}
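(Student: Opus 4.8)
The plan is to exhibit the displayed family as the complete system of central primitive idempotents of $\mathcal{T}(S \wr T)$ by appealing to the standard structural fact for semisimple algebras: in a finite-dimensional semisimple $\mathbb{C}$-algebra, any collection of pairwise orthogonal central primitive idempotents whose sum is the identity is already the \emph{full} set of central primitive idempotents. Concretely, if $\{f_i\}$ are pairwise orthogonal central primitive idempotents with $\sum_i f_i = 1$ and $e$ is any central primitive idempotent, then $e = \sum_i e f_i$ with each $e f_i$ a central idempotent lying below both $e$ and $f_i$; primitivity forces $e f_i \in \{0, e\} \cap \{0, f_i\}$, so some $e f_i \neq 0$ gives $e = f_i$. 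Thus three verifications suffice: each listed element is a central primitive idempotent, the listed elements are pairwise distinct, and their sum is the identity. Here distinctness already yields orthogonality, since distinct central primitive idempotents of a semisimple algebra are automatically orthogonal.

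For part (ii), where $|T| > 2$, the first verification is assembled from Section \ref{sec:main}: $e_{1_{\mathcal{T}(S \wr T)}}$ is the trivial idempotent; the $\tilde e_\chi$ are central primitive by Lemma \ref{lem:32}; the $\hat e_\psi$ by Lemma \ref{lem:34}; and $e_{\eta_1}, e_{\eta_2}$ are the identity elements of the two-sided ideals of Lemmas \ref{lem:ideal} and \ref{lem:ideal2}. Since each of those ideals is isomorphic to the simple algebra $Mat_{T_3}(\mathbb{C})$, it is a single Wedderburn component of $\mathcal{T}(S \wr T)$, so its identity is a central primitive idempotent. The third verification is exactly Lemma \ref{lem:sum}, which supplies $e_{1_{\mathcal{T}(S\wr T)}} + \sum_{\chi} \tilde e_\chi + \sum_{t \in T_3}\sum_{\psi} \hat e_\psi + e_{\eta_1} + e_{\eta_2} = I_{X \times Y}$.

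The remaining point is distinctness, which I would organize by support. The $\tilde e_\chi$ live on the fiber $F^{(1_Y)} = X \times \{y_0\}$, each $\hat e_\psi$ (for $t \in T_3$) on $F^{(t)} = X \times y_0 t$, and $e_{\eta_1}, e_{\eta_2}$ on $X \times (Y \setminus \{y_0\})$; within each family distinct $\chi$ or $\psi$ yield distinct $e_\chi$ or $e_\psi$, and distinct $t$ give disjoint $Y$-supports. The only genuinely overlapping supports are those of the $\hat e_\psi$ and the $e_{\eta_i}$, but the $X$-factor $e_\psi$ of $\hat e_\psi$ is orthogonal to the factor $|X|^{-1}J_X = e_{1_{\mathcal{A}(S)}}$ carried by $e_{\eta_1}, e_{\eta_2}$, so $e_\psi J_X = 0$ separates them; and $e_{\eta_1} \neq e_{\eta_2}$ because they are distinguished by the coefficients $\omega$ and $\omega^2$ attached to the off-diagonal blocks $\overline{\varepsilon_{y_0 t}}$ and $\underline{\varepsilon_{y_0 t}}$. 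With distinctness in hand, orthogonality and hence completeness follow, proving part (ii).

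For part (i), where $|T| = 2$, the scheme $(Y,T)$ is a one-class association scheme of valency $3$, so the restriction $(Y \setminus \{y_0\}, (T_{y_0})_{Y \setminus \{y_0\}})$ is not semiregular and the well-ordering construction of Section \ref{sec:main} does not apply; correspondingly the twisted idempotents $e_{\eta_1}, e_{\eta_2}$ do not occur, there being a single non-identity fiber and no cross-block structure to glue together, and the statement follows from \cite[Theorem 4.1]{hkm}. I expect the main obstacle to lie not in this final assembly—the genuine work having been discharged in Section \ref{sec:main}—but in confirming that $e_{\eta_1}, e_{\eta_2}$ are mutually orthogonal and do not coincide with any $\hat e_\psi$, since these families share the support $X \times (Y \setminus \{y_0\})$ and are distinguished only through the relation $e_\psi J_X = 0$ and the cube-root coefficients. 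Once that is secured, Lemma \ref{lem:sum} certifies that no idempotent has been omitted.
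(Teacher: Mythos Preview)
Your proposal is correct and matches the paper's approach: the paper offers no explicit proof, stating only that the theorem follows by combining Section~\ref{sec:main} with \cite[Theorem~4.1]{hkm}, and your argument is precisely a spelled-out version of that combination---Lemmas~\ref{lem:32}, \ref{lem:34}, \ref{lem:ideal}, \ref{lem:ideal2} for centrality and primitivity, Lemma~\ref{lem:sum} for completeness, and the cited result for the $|T|=2$ case. Your added discussion of distinctness and orthogonality fills in details the paper leaves implicit.
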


\bibstyle{plain}


\begin{thebibliography}{30}
\bibitem{song} G. Bhattacharyya, S.Y. Song, R. Tanaka, Terwilliger algebras of wreath products of one-class association schemes,
J. Algebr. Comb. 31 (2010) 455--466.
\bibitem{ep} S. Evdokimov, I. Ponomarenko, Permutation group approach to association schemes, European J. Combin. 30 (2009) 1456--1476.
\bibitem{mpp} \bysame, On primitive cellular algebras, Zap. Nauchn. Sem. S.-Peterburg. Otdel. Mat. Inst. Steklov.
(POMI) 256 (1999) 38--68, translation in J. Math. Sci. (New York) 107 (2001), 4172–-4191.
\bibitem{hkm} A. Hanaki, K. Kim, Y. Maekawa, Terwilliger algebras of direct and wreath products of association schemes,
J. Algebra 343 (2011) 195--200.
\bibitem{hkp} M. Hirasaka, K.-T. Kim, J.R. Park, Every 3-equivalenced association scheme is Frobenius, J. Algebr. Comb. (in press).
\bibitem{kim} K. Kim, Terwilliger algebras of wreath products by quasi-thin schemes, Linear Algebra Appl. 437 (2012) 2773--2780.
\bibitem{mp1} M. Muzychuk, I. Ponomarenko, On pseudocyclic association schemes, Ars Math. Contemp. 5 (2012) 1--25.
\bibitem{rie} R. Tanaka, Classification of commutative association schemes with almost commutative Terwilliger algebras, J. Algebr. Comb. 33 (2011) 1--10.
\bibitem{rieZies} R. Tanaka, P.-H. Zieschang, On a class of wreath products of hypergroups and association schemes, J. Algebr. Comb. 37 (2013) 601--619.
\bibitem{terwilliger} P. Terwilliger, The subconstituent algebra of an association scheme I, J. Algebr. Comb. 1 (1992) 363--388.
\bibitem{terwilliger2} \bysame, The subconstituent algebra of an association scheme II, J. Algebr. Comb. 2 (1993) 73--103.
\bibitem{terwilliger3} \bysame, The subconstituent algebra of an association scheme III, J. Algebr. Comb. 2 (1993) 177--210.
\bibitem{Xu} B. Xu, K. Zuo, On semisimple varietal Terwilliger algebras whose non-primary ideals are $1$-dimensional, J. Algebra 397 (2014) 426--442.
\bibitem{zies} P.-H. Zieschang, Theory of association schemes, Springer monographs in mathematics, Springer, Berlin, 2005.
\end{thebibliography}
\end{document}